\let\pa\partial
\let\na\nabla
\let\eps\varepsilon
\newcommand{\R}{{\mathbb R}}
\newcommand{\C}{{\mathbb C}}
\newcommand{\diver}{\operatorname{div}}
\newcommand{\dom}{{\mathcal D}}
\newtheorem{theorem}{Theorem}
\newtheorem{lemma}[theorem]{Lemma}
\newtheorem{proposition}[theorem]{Proposition}
\newtheorem{remark}[theorem]{Remark}
\newtheorem{corollary}[theorem]{Corollary}
\begin{document}

\title[Cross-diffusion systems with entropy structure]{When do cross-diffusion systems \\
have an entropy structure?}

\author[X. Chen]{Xiuqing Chen}
\address{{School of Mathematics (Zhuhai), Sun Yat-sen University,
Zhuhai 519082, Guangdong Province, China}}
\email{buptxchen@yahoo.com}

\author[A. J\"ungel]{Ansgar J\"ungel}
\address{Institute for Analysis and Scientific Computing, Vienna University of
	Technology, Wiedner Hauptstra\ss e 8--10, 1040 Wien, Austria}
\email{juengel@tuwien.ac.at}

\date{\today}

\thanks{The first author acknowledges support from the National Natural Science
Foundation of China (NSFC), grant 11471050. The second author acknowledges
partial support from the Austrian Science Fund (FWF), grants P30000, W1245, and F65.}

\begin{abstract}
Necessary and sufficient conditions for the existence of an entropy structure
for certain classes of cross-diffusion systems with diffusion matrix $A(u)$
are derived, based on results from matrix factorization.
The entropy structure is important in the analysis for such equations since
$A(u)$ is typically neither symmetric nor positive definite.
In particular, the normal ellipticity of $A(u)$ for all $u$ and the symmetry of
the Onsager matrix implies its positive definiteness and hence an entropy structure.
If $A$ is constant or nearly constant in a certain sense,
the existence of an entropy structure is equivalent to the normal ellipticity of $A$.
Several applications and examples are presented, including the $n$-species
population model of Shigesada, Kawasaki, and Teramoto, a volume-filling model,
and a fluid mixture model with partial pressure gradients. Furthermore, the normal
elipticity of these models is investigated and some extensions are discussed.
\end{abstract}

\keywords{Cross diffusion, entropy method, normal ellipticity, matrix factorization,
Lyapunov equation, population model, volume-filling model, fluid mixture model.}

\subjclass[2000]{35K40, 35K55, 35Q92, 35Q79, 15A23, 15A24.}

\maketitle


\section{Introduction}

Cross-diffusion systems are systems of quasilinear parabolic equations
in which the gradient of one variable induces a flux of another variable.
They arise naturally in multicomponent systems from physics, chemistry, and
biology and describe, for instance, segregation in population species,
ion transport through nanopores, or dynamics of gas mixtures (see \cite{Jue16}).
A characteristic feature of most of these systems arising from applications
is that the diffusion matrix is generally neither symmetric nor positive
definite which significantly complicates the mathematical analysis.
However, it turns out that there might exist a transformation
of variables (called entropy variables) such that the transformed diffusion
matrix becomes positive definite and sometimes even symmetric. This is an
important ingredient in the global existence analysis of the equations.
The question is under which conditions does such a transformation exist?
In this paper, we will give some necessary and sufficient conditions for the
existence of entropy variables for certain classes of cross-diffusion systems.


\subsection*{The setting}

We consider the equations
\begin{equation}\label{1.eq}
  \pa_t u_i = \diver\bigg(\sum_{j=1}^n A_{ij}(u)\na u_j\bigg)
	\quad\mbox{in }\Omega,\ t>0,\ i=1,\ldots,n,
\end{equation}
subject to the initial and no-flux boundary conditions
\begin{equation}\label{1.bic}
  u_i(0) = u_i^0\quad\mbox{in }\Omega, \quad
	\sum_{j=1}^n A_{ij}(u)\na u_j\cdot\nu = 0\quad\mbox{on }\pa\Omega,\ t>0,\
	i=1,\ldots,n,
\end{equation}
where $\Omega\subset\R^d$ ($d\ge 1$) is a bounded domain,
$u_i:\Omega\times(0,\infty)\to\R$ are the unknowns (for instance, densities
or concentrations), $A_{ij}(u)\in\R$ are the diffusion coefficients,
and $\nu$ is the exterior unit normal vector to $\pa\Omega$.
More general equations, where $A_{ij}(u)$ are matrices, will be briefly
discussed in Section \ref{sec.conn}. We may add reaction terms to \eqref{1.eq},
but we concentrate in this paper on the diffusion operator.

Typically, $A(u)$ is neither symmetric nor positive definite (see the examples below)
such that even the local-in-time existence of solutions to \eqref{1.eq}--\eqref{1.bic}
is nontrivial. Amann \cite{Ama90} has shown that there exist local classical
solutions if the operator $\diver(A(u)\na(\cdot))$ is normally elliptic, which
means that all eigenvalues of $A(u)$ have positive real parts. By slightly
abusing the notation, we call such matrices normally elliptic.
This property is usually not sufficient for global-in-time existence.
In many applications, there exists a transformation
of variables $w=h'(u)$, where $h\in C^2(\dom)$ ($\dom\subset\R^n$ being a domain)
is called an entropy density and $h'$ its derivative,
such that \eqref{1.eq} can be written as
\begin{equation}\label{1.B}
  \pa_t u_i(w) = \diver\bigg(\sum_{j=1}^n B_{ij}(w)\na w_j\bigg)
	\quad\mbox{in }\Omega,\ t>0,\ i=1,\ldots,n,
\end{equation}
where $u(w):=(h')^{-1}(w)$, and $B(w)=A(u(w))h''(u(w))^{-1}$ is
positive definite in the sense that $B(w)+B(w)^\top$ is symmetric positive definite.
(We assume that the inverse functions exist.)
In this situation, $t\mapsto\int_\Omega h(u(t))dx$ is a Lyapunov functional
along the solutions to \eqref{1.eq}--\eqref{1.bic}.
Indeed, using $w=h'(u)$ as a test function
in \eqref{1.eq}, a formal computation yields
$$
  \frac{d}{dt}\int_\Omega h(u)dx + \int_\Omega \na u:h''(u)A(u)\na u dx = 0,
$$
where ``:'' denotes the Frobenius matrix product. Since $B(w)$ is assumed to be
positive definite, so does $h''(u)A(u)$, which shows that $t\mapsto
\int_\Omega h(u(t))dx$ is nonincreasing. Moreover, the second integral generally
provides gradient estimates which are essential in the existence analysis.
We say that \eqref{1.eq} possesses an entropy structure if there exists a
strictly convex function $h\in C^2(\dom)$ such that $B(w)$ or,
equivalently, $h''(u)A(u)$ is positive definite.

The aim of this paper is to explore under which conditions there exists an entropy
structure and how the corresponding entropy density can be constructed.
Furthermore, we will explore the connection between normal ellipticity and
the existence of an entropy structure.


\subsection*{Examples}

In the literature, an entropy structure has been found for specific
classes of cross-diffusion systems. For instance, segregating population species
can be modeled by equations \eqref{1.eq} with the diffusion coefficients
$$
  A_{ij}(u) = \delta_{ij}p_i(u)+ u_i\pa p_i/\pa u_j, \quad i,j=1,\ldots,n,
$$
where $p_i(u)$ are transition rates originating from the
lattice model from which these equations can be formally derived
\cite[Appendix A]{ZaJu17}.
This model was suggested by Shigesada, Kawasaki, and Teramoto \cite{SKT79}
for $p_i(u)=a_{i0}+\sum_{j=1}^n a_{ij}u_j$
and $n=2$. The $n$-species model with linear or nonlinear functions $p_i(u)$
was analyzed in \cite{CDJ18,DLMT15,Jue15,LeMo17}, and the existence of global weak
solutions was proved. Equations \eqref{1.eq} with diffusion coefficients
associated to linear functions $p_i$,
\begin{equation}\label{1.skt}
  A_{ij}(u) = \delta_{ij}\bigg(a_{i0} + \sum_{k=1}^n a_{ik}u_k\bigg) + a_{ij}u_i,
	\quad i,j=1,\ldots,n,
\end{equation}
where $a_{i0}\ge 0$, $a_{ij}\ge 0$, are called the SKT model.
It has an entropy structure
if there exist numbers $\pi_1,\ldots,\pi_n>0$ such that
$\pi_i a_{ij}=\pi_j a_{ji}$ for all $i\neq j$ holds. This assumption can be
recognized as the detailed-balance condition for the Markov chain
generated by $(a_{ij})$. Moreover,
the function $h(u)=\sum_{i=1}^n \pi_iu_i(\log u_i-1)$ is an entropy density,
and $h''(u)A(u)$ is positive definite for all $u\in\dom=\R_+^n$.

A second example are volume-filling models which describe multi-species systems
which take into account the finite size of the species and are given by
\eqref{1.eq} with the diffusion coefficients
\begin{equation}\label{1.vf}
  A_{ij}(u) = \delta_{ij} p_i(u)q_i(u_0) + u_ip_i(u)q'_i(u_0)
	+ u_iq_i(u_0)\pa p_i/\pa u_j,
\end{equation}
where $p_i$ and $q_i$ are transition rates (again, see \cite[Appendix A]{ZaJu17}
for a formal derivation)
and $u_0:=1-\sum_{i=1}^n u_i$ is the volume fraction of ``free space''
(in the context of biological models) or the solvent concentration (in the context
of fluid mixtures). The concentration vector $u=(u_1,\ldots,u_n)$ is an element
of $\dom=\{u\in\R_+^n:\sum_{i=1}^n u_i<1\}$, the so-called Gibbs simplex.
The existence of global weak solutions to system \eqref{1.eq} with $q_i=q$ in
\eqref{1.vf} was proved in \cite{ZaJu17}.
If there exists a convex function $\chi$ such that
$\pa\chi/\pa u_i=\log p_i$ for $i=1,\ldots,n$, then the function
$$
  h(u) = \sum_{i=1}^n u_i(\log u_i-1) + \int_a^{u_0}\log q(s)ds + \chi(u), \quad
	u\in\dom,
$$
is an entropy density, and $h''(u)A(u)$ is positive definite for $u\in\dom$.

A third example are equations for fluid mixtures driven by partial pressure
gradients,
\begin{equation}\label{1.pp}
  \pa_t u_i = \diver(u_i\na p_i(u)), \quad i=1,\ldots,n,
\end{equation}
where $u_i$ is the density of the $i$th fluid component
and $p_i$ is the $i$th partial pressure.
This model follows from the mass continuity equation $\pa_t u_i+\diver(u_iv_i)=0$
if the partial velocities $v_i$ are related to the partial pressures via Darcy's law,
$v_i=-\na p_i(u)$.
This system was derived from an interacting particle system in the mean-field
limit in \cite{CDJ19}. The entropy structure of this system is unknown up to now.
We determine conditions on the pressures $p_i$ under which \eqref{1.pp} has
an entropy structure.


\subsection*{Main results}

We sketch some of our main results. For details, we refer to the following sections.

\begin{itemize}
\item Section 2: If a matrix $A\in\R^{n\times n}$ can be decomposed as the
product $A_1A_2$ with a symmetric positive definite square matrix $A_1$,
the normal ellipticity or diagonalizability
of $A$ can be proved subject to properties of the square matrix $A_2$.
We collect known results
from matrix factorization theory and prove a new result characterizing the normal
ellipticity of $A$. We apply these findings in Sections \ref{sec.ent}--\ref{sec.popul}.

\item Section 3: Any cross-diffusion system with entropy
structure has a normally elliptic diffusion matrix. Thus, the normal ellipticity
is a necessary condition. Under this condition, if there exists a strictly convex
function $h\in C^2(\dom)$ ($\dom\subset\R^n$ being a domain)
such that $h''(u)A(u)$ is symmetric in $\dom$,
then $h''(u)A(u)$ is positive definite in $\dom$.
The symmetry requirement may be used to determine $h(u)$,
and we present some examples in this direction.

\item Section 4: If the diffusion matrix $A$ is constant, then its normal ellipticity
is equivalent to the existence of an entropy structure. Even if the normally
elliptic constant matrix $A$ is perturbed by a bounded nonlinear matrix, \eqref{1.eq}
has an entropy structure. Such a structure also exists if $h''(u)A(u)$
is symmetric up to a bounded nonlinear perturbation.

\item Section 5: If the entropy density is the sum of single-valued functions
and $h''(u)A(u)$ is symmetric, the positive definiteness of $h''(u)A(u)$ is equivalent
to the positivity of the leading principal minors of $A(u)$. This avoids the computation
of the eigenvalues of $A(u)$ to check its normal ellipticity. The idea allows
us to construct entropies in some situations, for instance for a general class
of $2\times 2$ diffusion matrices.

\item Section 6: If the matrix $(\pa p_i/\pa u_j)$ is normally
elliptic in $\dom$ and the detailed-balance condition
$$
  \pi_i \frac{\pa p_i}{\pa u_j}(u) = \pi_j\frac{\pa p_j}{\pa u_i}(u) \quad
	\mbox{for all }u\in\dom,\ i\neq j,
$$
holds, then the fluid mixture model \eqref{1.pp} has an entropy structure
with a Boltzmann-type entropy density.
Surprisingly, there exists a {\em second} entropy density, which is of quadratic type.
It is derived from the Poincar\'e lemma for closed differential forms by
interpreting the detailed-balance condition as the curl-freeness of the vector-field
$(\pi_1p_1,\ldots,\pi_np_n)$. These results are new.

\item Section 7: We prove that the diffusion matrix \eqref{1.skt} of the SKT model
is normally elliptic. Surprisingly, this property has not been
proved in the literature so far (except for the easy case $n=2$).
Furthermore, we investigate the normal ellipticity of the diffusion matrix
\eqref{1.vf} of the volume-filling model and \eqref{1.pp} of the fluid mixture model.
Also these results are new.
\end{itemize}

Finally, we discuss in Section \ref{sec.conn} some connections with results of
other authors and some extensions.


\subsection*{Definitions and notation}

We consider only real matrices $A\in\R^{n\times n}$ with coefficients
$A_{ij}$. The coefficients of a vector $u\in\R^n$ are denoted by $u_1,\ldots,u_n$.
The set $\sigma(A)$ signifies the spectrum of $A$. We say that the (possibly
nonsymmetric) matrix $A$ is positive definite if $z^\top Az>0$ for all
$z\in\R^n$, $z\neq 0$, or, equivalently, if $A+A^\top$
is positive definite. The matrix $A$ is normally elliptic if
$\operatorname{Re}(\lambda)>0$ for all $\lambda\in\sigma(A)$. In stability theory,
this property is sometimes called positive stability.
We say that $A$ is diagonalizable if there exists a
nonsingular matrix $P\in\R^{n\times n}$ such that $A=P\Lambda P^{-1}$ and
$\Lambda=\operatorname{diag}(\lambda_1,\ldots,\lambda_n)\in\R^{n\times n}$,
where $\lambda_i\in\sigma(A)$ for $i=1,\ldots,n$. In particular, the eigenvalues
of diagonalizable matrices are (here) real. We denote by $I\in\R^{n\times n}$ the
identity matrix.

Let $\dom\subset\R^n$ be a domain.
We say that \eqref{1.eq} has an entropy structure if there
exists a strictly convex function $h\in C^2(\dom)$ such that $h''(u)A(u)$
is positive definite for all $u\in\dom$. The matrix $A(u)h''(u)^{-1}$ is
called the Onsager matrix and it is symmetric and/or positive definite if and
only if $h''(u)A(u)$ is symmetric and/or positive definite, respectively.
The integral $\mathcal{H}(u)=\int_\Omega h(u)dx$ is called an entropy and
$-d\mathcal{H}/dt$ the entropy production.
Finally, we set $\R_+=(0,\infty)$ and $\C_+=\{y\in\C:\operatorname{Re}(y)>0\}$.


\section{Factorization of matrices}\label{sec.fac}

In this section, we collect some results concerned with the factorization of a
normally elliptic or diagonalizable matrix $A\in\R^{n\times n}$. Some of these
results are new. We will apply them to the diffusion matrix $A(u)$ from \eqref{1.eq}.
The factorization is based on the Lyapunov theorem for matrix equations;
see, e.g., \cite[Theorems 2.2.1 and 2.2.3]{HoJo91}.

\begin{theorem}[Lyapunov]
{\rm (i)} If $A\in\R^{n\times n}$ is normally elliptic then for any given
$G\in\R^{n\times n}$, there exists a unique matrix $H\in\R^{n\times n}$ such
that $HA+A^\top H=G$.

{\rm (ii)} The matrix $A\in\R^{n\times n}$ is normally elliptic if and only if
for a given symmetric positive definite matrix $G\in\R^{n\times n}$, there exists
a symmetric positive definite matrix $H\in\R^{n\times n}$ such that
$HA+A^\top H=G$.
\end{theorem}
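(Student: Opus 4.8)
The plan is to prove the classical Lyapunov theorem in two parts, treating existence/uniqueness (part (i)) and the positive-definite characterization (part (ii)) separately, since the latter uses the former.

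\medskip

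\noindent\textbf{Part (i).} The key observation is that the map $\mathcal{L}\colon\R^{n\times n}\to\R^{n\times n}$ defined by $\mathcal{L}(H)=HA+A^\top H$ is linear on the $n^2$-dimensional space of matrices. Hence existence of a solution $H$ for \emph{every} right-hand side $G$ is equivalent to uniqueness, which in turn is equivalent to $\mathcal{L}$ being injective, i.e., to $\mathcal{L}(H)=0$ forcing $H=0$. First I would compute the spectrum of $\mathcal{L}$. Working over $\C$ and using the eigenvalues $\lambda_1,\ldots,\lambda_n\in\sigma(A)$, a standard computation (e.g.\ via the Kronecker-product representation $\mathcal{L}\leftrightarrow I\otimes A^\top + A^\top\otimes I$, or by testing against products of generalized eigenvectors of $A$ and $A^\top$) shows that the eigenvalues of $\mathcal{L}$ are exactly the sums $\lambda_i+\lambda_j$ for $1\le i,j\le n$. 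Since $A$ is normally elliptic, $\operatorname{Re}(\lambda_i)>0$ for every $i$, so $\operatorname{Re}(\lambda_i+\lambda_j)>0$ and in particular no eigenvalue of $\mathcal{L}$ vanishes. Therefore $\mathcal{L}$ is invertible, which gives both existence and uniqueness of $H$ for each $G$.

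\medskip

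\noindent\textbf{Part (ii), sufficiency.} Assume $A$ is normally elliptic and let $G$ be symmetric positive definite. By part (i) there is a unique $H$ with $HA+A^\top H=G$; taking transposes shows $H^\top$ solves the same equation, so by uniqueness $H=H^\top$ is symmetric. To see that $H$ is positive definite I would use the integral representation: since the real parts of all eigenvalues of $A$ are positive, $\|e^{-tA}\|$ decays exponentially, and the matrix
\begin{equation*}
  H=\int_0^\infty e^{-A^\top t}\,G\,e^{-At}\,dt
\end{equation*}
is well-defined, symmetric, and positive definite (the integrand is positive definite for each $t$ because $G$ is). Differentiating $\frac{d}{dt}\big(e^{-A^\top t}Ge^{-At}\big)=-A^\top e^{-A^\top t}Ge^{-At}-e^{-A^\top t}Ge^{-At}A$ and integrating over $[0,\infty)$ confirms $A^\top H+HA=G$, so this $H$ is the solution from part (i) and is positive definite.

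\medskip

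\noindent\textbf{Part (ii), necessity.} Conversely, suppose symmetric positive definite $H$ and $G$ satisfy $HA+A^\top H=G$; I would show every eigenvalue of $A$ has positive real part. Let $\lambda\in\sigma(A)$ with (possibly complex) eigenvector $z\neq 0$, so $Az=\lambda z$ and $z^\ast A^\top=\bar\lambda z^\ast$ after conjugate-transposing $A^\top\bar z=\bar\lambda\bar z$. Multiplying the Lyapunov equation on the left by $z^\ast$ and on the right by $z$ gives $z^\ast(HA+A^\top H)z=2\operatorname{Re}(\lambda)\,z^\ast Hz=z^\ast Gz$. Since $H$ and $G$ are positive definite, $z^\ast Hz>0$ and $z^\ast Gz>0$, forcing $\operatorname{Re}(\lambda)>0$. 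As $\lambda$ was arbitrary, $A$ is normally elliptic.

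\medskip

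\noindent The main obstacle is the eigenvalue computation for $\mathcal{L}$ in part (i); identifying $\sigma(\mathcal{L})=\{\lambda_i+\lambda_j\}$ cleanly requires either the Kronecker-product machinery or a careful argument with (generalized) eigenvectors when $A$ is not diagonalizable, whereas the remaining steps are short bilinear-form manipulations. Since the statement cites \cite[Theorems 2.2.1 and 2.2.3]{HoJo91}, I would lean on the standard Kronecker-sum spectral identity there rather than redoing it.
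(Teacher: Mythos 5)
Your proposal is correct, but there is nothing in the paper to compare it against: the paper states the Lyapunov theorem as a known result and simply cites \cite[Theorems 2.2.1 and 2.2.3]{HoJo91}, offering no proof of its own. What you give is essentially the standard proof from that reference: for (i), the Lyapunov map $\mathcal{L}(H)=HA+A^\top H$ is linear on $\R^{n\times n}$, its spectrum is $\{\lambda_i+\lambda_j:\lambda_i,\lambda_j\in\sigma(A)\}$ via the Kronecker-sum representation, and normal ellipticity keeps $0$ out of this set, so $\mathcal{L}$ is bijective; for (ii), sufficiency follows from symmetry-by-uniqueness together with the integral representation $H=\int_0^\infty e^{-A^\top t}Ge^{-At}\,dt$, and necessity from the bilinear-form identity $z^*(HA+A^\top H)z=2\operatorname{Re}(\lambda)\,z^*Hz=z^*Gz$. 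All three steps are sound, and your integral formula is consistent with the paper's later remark giving the explicit formula for $H$ in the case $G=I$. One small slip to repair in the necessity argument: the intermediate equation $A^\top\bar z=\bar\lambda\bar z$ is false in general, since for real $A$ the vector $\bar z$ is an eigenvector of $A$ (indeed $A\bar z=\bar\lambda\bar z$), not of $A^\top$. The identity you actually use, $z^*A^\top=\bar\lambda z^*$, follows directly by conjugate-transposing $Az=\lambda z$ and using $A^*=A^\top$ for real $A$, so the conclusion $2\operatorname{Re}(\lambda)\,z^*Hz=z^*Gz>0$, hence $\operatorname{Re}(\lambda)>0$, stands. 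A second cosmetic point: when you invoke exponential decay of $\|e^{-tA}\|$, a non-diagonalizable $A$ contributes polynomial factors of the form $t^ke^{-\operatorname{Re}(\lambda)t}$; these are still integrable on $[0,\infty)$, so the convergence of the integral and the validity of the integration-by-parts step are unaffected.
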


We are analyzing factorizations $A=A_1A_2$ or $A=A_2A_1$ such that $A_1$
is symmetric positive definite. We will determine properties of $A$ when $A_2$ is
symmetric or positive definite and vice versa.

\begin{proposition}[Positive definite factorization]\label{prop.pdf}
The matrix $A\in\R^{n\times n}$ is normally elliptic if and only if there
exists a symmetric positive definite matrix $A_1$ and a positive definite matrix
$A_2$ such that $A=A_1A_2$ (or $A=A_2A_1$).
\end{proposition}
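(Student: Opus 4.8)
The plan is to prove both implications separately, using the Lyapunov theorem (part (ii)) as the engine for the harder direction and an elementary similarity/congruence argument for the other.

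First I would treat the direction ``factorization $\Rightarrow$ normal ellipticity''. Given $A=A_1A_2$ with $A_1$ symmetric positive definite, I would introduce the symmetric positive definite square root $A_1^{1/2}$ and observe the identity
$$
  A_1^{-1/2}AA_1^{1/2} = A_1^{-1/2}A_1A_2A_1^{1/2} = A_1^{1/2}A_2A_1^{1/2}.
$$
This shows $A$ is similar to $M:=A_1^{1/2}A_2A_1^{1/2}$, so $\sigma(A)=\sigma(M)$. Since $A_1^{1/2}$ is nonsingular, the substitution $y=A_1^{1/2}z$ gives $z^\top Mz=y^\top A_2 y>0$ for $z\neq 0$, so $M$ is positive definite. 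The remaining ingredient is the standard fact that a (possibly nonsymmetric) real positive definite matrix has spectrum in $\C_+$: for an eigenpair $(\lambda,v)$ with $v=p+iq\in\C^n\setminus\{0\}$, one computes $\operatorname{Re}(v^* Mv)=p^\top Mp+q^\top Mq>0$ (since at least one of $p,q$ is nonzero) while $v^* Mv=\lambda\|v\|^2$, whence $\operatorname{Re}(\lambda)>0$. Thus $A$ is normally elliptic. The factorization $A=A_2A_1$ yields the same matrix $M$ under conjugation by $A_1^{1/2}$, so the identical conclusion holds.

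For the converse I would invoke part (ii) of the Lyapunov theorem: since $A$ is normally elliptic, fixing any symmetric positive definite $G$ (for instance $G=I$) produces a symmetric positive definite $H$ solving $HA+A^\top H=G$. Setting $A_1:=H^{-1}$ (symmetric positive definite) and $A_2:=HA$, I obtain $A=A_1A_2$, and because $H=H^\top$,
$$
  A_2+A_2^\top = HA+A^\top H = G
$$
is positive definite, so $A_2$ is positive definite. To produce the alternative form $A=A_2A_1$, I would apply the same reasoning to $A^\top$, which shares the spectrum of $A$ and is therefore also normally elliptic: this gives a symmetric positive definite $\widetilde H$ with $\widetilde HA^\top+A\widetilde H=G$, and then $A_1:=\widetilde H^{-1}$, $A_2:=A\widetilde H$ satisfy $A=A_2A_1$ with $A_2+A_2^\top=G$ positive definite.

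The step I expect to require the most care is the spectral claim in the forward direction for nonsymmetric positive definite matrices, where the complex eigenvector must be split into its real and imaginary parts so that the positivity $p^\top Mp+q^\top Mq>0$ is invoked correctly (the cross terms being purely imaginary and at least one of $p,q$ being nonzero). By contrast, the Lyapunov-based converse is essentially bookkeeping once the correct assignment $A_1=H^{-1}$, $A_2=HA$ is identified.
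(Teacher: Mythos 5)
Your proof is correct, and it splits into one direction that matches the paper and one that is genuinely different. For ``normally elliptic $\Rightarrow$ factorization'' you do exactly what the paper does: solve the Lyapunov equation $HA+A^\top H=G$ with $G$ symmetric positive definite, set $A_1=H^{-1}$, $A_2=HA$, and handle the form $A=A_2A_1$ by running the same argument on $A^\top$ (the paper transposes the factorization $A^\top=A_1B$, you build $A_2=A\widetilde H$ directly; these are the same computation). For the converse, however, the paper again stays inside the Lyapunov framework: from $A=A_1A_2$ it sets $H:=A_1^{-1}$, notes $HA+A^\top H=A_2+A_2^\top$ is symmetric positive definite, and invokes the sufficiency half of the Lyapunov theorem to conclude normal ellipticity. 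You instead conjugate by the square root $A_1^{1/2}$ to see that $A$ is similar to the positive definite matrix $M=A_1^{1/2}A_2A_1^{1/2}$, and then prove from scratch, via the real/imaginary split $v=p+iq$ of an eigenvector, that a real positive definite matrix has spectrum in $\C_+$. Your route is more self-contained --- it does not use part (ii) of the Lyapunov theorem at all in that direction, and in effect reproves the relevant special case of it --- and it yields the slightly stronger structural fact that $A$ is similar to a positive definite matrix. The paper's route is shorter given that the Lyapunov theorem has already been stated, and it keeps both directions of the equivalence visibly tied to the same matrix equation. One small point of care in your argument, which you handled correctly: the cross terms $i(p^\top Mq-q^\top Mp)$ are purely imaginary, so $\operatorname{Re}(v^*Mv)=p^\top Mp+q^\top Mq>0$ indeed requires only that at least one of $p,q$ be nonzero.
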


\begin{proof}
Let $A\in\R^{n\times n}$ be normally elliptic. By the Lyapunov theorem, there
exists a symmetric positive definite matrix $H$ such that $HA+A^\top H=I$.
Then $A_1=H^{-1}$ is symmetric positive definite and $A_2=HA$ satisfies
$A_2+A_2^\top=I$, i.e., $A_2$ is positive definite. This yields the desired
factorization $A=A_1A_2$. Furthermore, since $A^\top$ is normally elliptic,
the same argument shows that there exists a symmetric positive definite matrix
$A_1$ and a positive definite matrix $B$ such that $A^\top=A_1B$. We conclude that
$A=A_2A_1$ with $A_2:=B^\top$.

Assume that $A=A_1A_2$, where $A_1$ is symmetric positive definite and $A_2$
is positive definite. Set $H:=A_1^{-1}$. Then $A_2=HA$ and $HA+A^\top H = A_2+A_2^\top$
is positive definite. By the Lyapunov theorem, $A$ is normally elliptic.
If $A=A_2A_1$, the same argument can be applied to $A^\top=A_1A_2^\top$.
\end{proof}

The first part of the following result is proved in \cite[Theorem 6]{Bos87}.

\begin{proposition}[Symmetric factorization]\label{prop.sf}
{\rm (i)} The matrix $A\in\R^{n\times n}$ is diagonalizable if and only if 
there exists a
symmetric positive definite matrix $A_1$ and a symmetric matrix $A_2$ such that
$A=A_1A_2$ (or $A=A_2A_1$). 

{\rm (ii)} If $A=A_1A_2$ or $A=A_2A_1$
is normally elliptic with $A_1$ being symmetric positive definite and $A_2$
being symmetric, then $A_2$ is also positive definite.
\end{proposition}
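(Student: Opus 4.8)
The plan is to dispatch part~(i) with the symmetric square root of $A_1$ and then build part~(ii) on top of it, since under the hypotheses of~(ii) the matrix $A$ is automatically diagonalizable.

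For part~(i), I would introduce the symmetric positive definite square root $A_1^{1/2}$ and the conjugate
$$
  M := A_1^{1/2}A_2A_1^{1/2},
$$
which is symmetric exactly when $A_2$ is. A short computation shows that in either factorization order $A$ is similar to $M$: conjugating by $A_1^{1/2}$ handles $A = A_1A_2$, and conjugating by $A_1^{-1/2}$ handles $A = A_2A_1$. For the ``if'' direction, the spectral theorem writes $M = Q\Lambda Q^\top$ with $Q$ orthogonal and $\Lambda$ real diagonal, so $A = (A_1^{\pm 1/2}Q)\Lambda(A_1^{\pm 1/2}Q)^{-1}$ is diagonalizable with real eigenvalues. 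For the ``only if'' direction, given $A = P\Lambda P^{-1}$ I would take $A_1 = PP^\top$ and check that $A_2 := A_1^{-1}A = P^{-\top}\Lambda P^{-1}$ is symmetric; the factorization $A = A_2A_1$ is then obtained by applying the same construction to $A^\top$ and transposing. Alternatively one simply cites~\cite[Theorem~6]{Bos87}.

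Part~(ii) is where the real work is, and the key realization is that its hypotheses ($A_1$ symmetric positive definite, $A_2$ symmetric) are precisely those of part~(i). Thus $A$ is diagonalizable and $\sigma(A)\subset\R$. Normal ellipticity then gives $\operatorname{Re}(\lambda)>0$ for all $\lambda\in\sigma(A)$, and together these two facts force every eigenvalue of $A$ to be strictly positive. I would then reuse $M = A_1^{1/2}A_2A_1^{1/2}$: it is symmetric and, being similar to $A$, has the same spectrum, so it is a symmetric matrix with positive eigenvalues, hence symmetric positive definite. Finally $A_2 = A_1^{-1/2}MA_1^{-1/2}$ is congruent to $M$, and congruence preserves positive definiteness, for if $z\neq 0$ then $w := A_1^{-1/2}z\neq 0$ and $z^\top A_2z = w^\top Mw>0$. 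This gives the positive definiteness of $A_2$.

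I do not expect a genuine obstacle here; the entire argument turns on the single upgrade ``positive real part $\Rightarrow$ positive,'' which only works because part~(i) supplies the realness of $\sigma(A)$. The two places demanding care are making sure both factorization orders $A = A_1A_2$ and $A = A_2A_1$ reduce to the \emph{same} symmetric matrix $M$ (they are similar to it via $A_1^{1/2}$ and $A_1^{-1/2}$, respectively), and invoking diagonalizability \emph{before} normal ellipticity, since it is the absence of imaginary parts, not normal ellipticity by itself, that converts positive real parts into genuinely positive eigenvalues.
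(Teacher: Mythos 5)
Your proposal is correct, but it takes a genuinely different route from the paper. The paper proves only part (ii) (part (i) is simply cited from Bosch's theorem), and its argument runs through the Lyapunov matrix equation: writing $A_2=A_1^{-1}A$, one checks that $A_2A^{-1}+(A^{-1})^\top A_2=2A_1^{-1}$; since $A^{-1}$ is normally elliptic and $2A_1^{-1}$ is symmetric positive definite, the Lyapunov theorem provides a symmetric positive definite solution $H$ of $HA^{-1}+(A^{-1})^\top H=2A_1^{-1}$, and the uniqueness clause of that theorem forces $H=A_2$. Your argument instead conjugates by the symmetric square root, $M=A_1^{1/2}A_2A_1^{1/2}$, uses part (i) to get $\sigma(A)\subset\R$, upgrades normal ellipticity to strictly positive eigenvalues, and transfers positive definiteness from $M$ to $A_2$ by congruence. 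This is essentially the Sylvester inertia argument that the paper itself invokes only later, in the remark following the proposition (citing Carlson--Schneider): congruence preserves inertia, so $A_2$ and $M$ (hence $A$) have the same signature. What the paper's route buys is economy within its own framework --- the Lyapunov theorem is already the backbone of the section, and the proof is three lines once it is available (note it silently uses that normal ellipticity makes $A$ invertible). What your route buys is self-containedness and transparency: you need only the spectral theorem and square roots, you get a full proof of part (i) rather than a citation, both factorization orders visibly collapse to the same symmetric matrix $M$, and the logical order you insist on --- diagonalizability first, normal ellipticity second --- isolates exactly where realness of the spectrum enters. Both proofs are complete; yours is the more elementary, the paper's the more structural.
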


\begin{proof}
It remains to prove part (ii). Indeed, we have $A_1^{-1}=A_2A^{-1}$
and hence $A_2A^{-1} + (A^{-1})^\top A_2 = A_1^{-1} + (A_1^{-1})^\top = 2A_1^{-1}$.
Since $A^{-1}$ is normally elliptic and $2A_1^{-1}$ is
symmetric positive definite, we conclude from the Lyapunov
theorem that there exists a unique symmetric positive definite matrix $H$ such that
$HA^{-1}+(A^{-1})^\top H=2A_1^{-1}$. The uniqueness of $H$ implies that $H=A_2$,
showing that $A_2$ is positive definite. The same argument can be made for $A=A_2A_1$.
\end{proof}

\begin{remark}[Eigenvalues of $A$]\rm
If $A=A_1A_2$ factorizes in a symmetric positive definite matrix $A_1$ and a symmetric
matrix $A_2$, Proposition \ref{prop.sf} implies in particular that the eigenvalues
of $A$ are real. We can say a bit more: By the inertia theorem of Sylvester
\cite[Section 1]{CaSc62}, the inertia of $A_1A_2$ and $A_2$ are the same, which
means that the number of positive, negative, and vanishing eigenvalues of $A$ and
$A_2$, respectively, are the same.
In particular, if $A_2$ has only positive eigenvalues, $A$ is normally elliptic. The
eigenvalues of $A$ can be bounded from below (or above) by the product of the
eigenvalues of $A_1$ and $A_2$; see, e.g., \cite[Theorem 2.2]{LuPe00} for details.
\qed
\end{remark}

\begin{remark}[Compatibility of factorizations]\label{rem.comp}\rm
Let $A=A_1A_2$ or $A=A_2A_1$ be a matrix factorization with a symmetric
positive definite matrix $A_1$. 
Proposition \ref{prop.sf} (ii) states that if $A$ is normally elliptic and $A_2$ 
is symmetric then $A_2$ is positive definite.
We may ask whether the diagonalizability of $A$
and positive definiteness of $A_2$ imply symmetry of $A_2$. The answer is no.
A counter-example is given as follows. Let
$$
  A = \begin{pmatrix} 1 & 2 \\ 0 & 2 \end{pmatrix}, \quad
	P = \begin{pmatrix} 1 & 2 \\ 0 & 1 \end{pmatrix}, \quad
	\Lambda = \begin{pmatrix} 1 & 0 \\ 0 & 2 \end{pmatrix}.
$$
Then $A=P\Lambda P^{-1}$ is diagonalizable and positive definite. The matrix
$A_1=I$ is symmetric positive definite, $A_2=A$ is positive definite, and
$A=A_1A_2=A_2A_1$. However, $A_2$ is not symmetric. Still, we can factorize
$A=A_1A_2$ with two symmetric positive definite matrices
$$
  A_1 = \begin{pmatrix} 5 & 2 \\ 2 & 1 \end{pmatrix}, \quad
	A_2 = \begin{pmatrix} 1 & -2 \\ -2 & 6 \end{pmatrix}.
$$
This motivates the following proposition.
\qed
\end{remark}

\begin{proposition}[Symmetric positive definite factorization]\label{prop.spdf}
The matrix $A\in\R^{n\times n}$ is normally elliptic and diagonalizable if
and only if it is a product of two symmetric positive definite matrices.
\end{proposition}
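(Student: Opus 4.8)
The plan is to deduce both implications directly from the two factorization results already in hand, Propositions \ref{prop.pdf} and \ref{prop.sf}, exploiting the fact that a symmetric positive definite matrix is simultaneously symmetric and positive definite. This is exactly the overlap in hypotheses that lets the two propositions interlock, so the present statement is essentially their synthesis and requires no new machinery.

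For the implication ``product of two symmetric positive definite matrices $\Rightarrow$ normally elliptic and diagonalizable,'' I would begin from $A=A_1A_2$ with $A_1,A_2$ both symmetric positive definite. Viewing $A_2$ merely as positive definite, Proposition \ref{prop.pdf} yields that $A$ is normally elliptic; viewing $A_2$ merely as symmetric, Proposition \ref{prop.sf}(i) yields that $A$ is diagonalizable. Nothing beyond these two citations is needed.

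For the converse, suppose $A$ is normally elliptic and diagonalizable. Diagonalizability together with Proposition \ref{prop.sf}(i) produces a factorization $A=A_1A_2$ in which $A_1$ is symmetric positive definite and $A_2$ is symmetric. I would then feed precisely this factorization into Proposition \ref{prop.sf}(ii): since $A$ is normally elliptic, $A_1$ is symmetric positive definite, and $A_2$ is symmetric, that result upgrades $A_2$ to positive definite. A matrix that is at once symmetric and positive definite is symmetric positive definite, and hence $A=A_1A_2$ is the desired product.

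The one point requiring attention—bookkeeping rather than a genuine obstacle—is that the factorization supplied by Proposition \ref{prop.sf}(i) must be the very one passed to Proposition \ref{prop.sf}(ii); since (ii) is stated for an arbitrary factorization with $A_1$ symmetric positive definite and $A_2$ symmetric, the specific $A_2$ obtained above qualifies, and its positive definiteness follows. All the substantive content (the Lyapunov-equation uniqueness argument behind (ii) and the symmetric-factorization theorem behind (i)) has already been established. If a self-contained argument for the converse were preferred, one could instead write $A=P\Lambda P^{-1}$ with real $P$ and $\Lambda=\operatorname{diag}(\lambda_1,\ldots,\lambda_n)$, where each $\lambda_i>0$ because diagonalizability (in the sense used here) forces real eigenvalues and normal ellipticity forces positive real parts; then setting $A_1=PP^\top$ and $A_2=(P^\top)^{-1}\Lambda P^{-1}$ gives $A_1A_2=A$, with $A_1$ visibly symmetric positive definite and $A_2$ symmetric and satisfying $z^\top A_2 z=(P^{-1}z)^\top\Lambda(P^{-1}z)>0$ for $z\neq 0$.
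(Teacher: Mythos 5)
Your proof is correct and follows essentially the same route as the paper: the forward direction combines Propositions \ref{prop.pdf} and \ref{prop.sf}(i), and the converse chains Proposition \ref{prop.sf}(i) into Proposition \ref{prop.sf}(ii), which is exactly the paper's (much terser) argument. Your explicit alternative construction $A_1=PP^\top$, $A_2=(P^\top)^{-1}\Lambda P^{-1}$ is a nice self-contained bonus, but the core argument matches the paper's.
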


Note that $A\in\R^{n\times n}$ is normally elliptic and diagonalizable
if and only if $A$ is diagonalizable with positive eigenvalues. Thus,
the proposition is the same as \cite[Theorem 7]{Bos87}, but our proof is new.

\begin{proof}
The sufficiency follows from Propositions \ref{prop.pdf} and \ref{prop.sf}, while
the necessity is a consequence of Proposition \ref{prop.sf}.
\end{proof}

Table \ref{table} summarizes the factorization results.

\begin{table}[ht]
\centering
\caption{Types of factorization. PD = positive definite, S = symmetric,
NE = normally elliptic, D = diagonalizable.}
\begin{tabular}{|c|c|c|c|c|c|}
\hline
Factorization          & Prop.          & $A$ & $\sigma(A)$ & $A_1$ & $A_2$ \\ \hline
Positive definite      & \ref{prop.pdf} & NE  & $\C_+$      & S+PD  & PD \\
Symmetric              & \ref{prop.sf}  & D   & $\R$        & S+PD  & S \\
Symmetric positive definite & \ref{prop.spdf}& NE+D& $\R_+$      & S+PD & S+PD \\ \hline
\end{tabular}
\label{table}
\end{table}


\section{Necessary conditions for an entropy structure}\label{sec.ent}

We use the matrix factorization results to characterize the entropy structure
of \eqref{1.eq}.

\begin{theorem}\label{thm.main}
Let $A(u)\in\R^{n\times n}$ with $u\in\dom$.

{\rm (i)} If \eqref{1.eq} has an entropy structure then $A(u)$ is normally elliptic
for all $u\in\dom$.

{\rm (ii)} If $A(u)$ is normally elliptic for all $u\in\dom$ and there exists a
strictly convex function $h\in C^2(\dom)$ such that $h''(u)A(u)$ is symmetric for
all $u\in\dom$, then $h''(u)A(u)$ is positive definite for all $u\in\dom$, i.e.,
\eqref{1.eq} has an entropy structure.

{\rm (iii)} If \eqref{1.eq} has an entropy structure such that $h''(u)A(u)$ is
symmetric for all $u\in\dom$, then $A(u)$ is diagonalizable with positive eigenvalues.
\end{theorem}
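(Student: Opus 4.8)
The plan is to prove Theorem \ref{thm.main} by leveraging the matrix factorization results from Section \ref{sec.fac}, which were clearly set up for exactly this purpose. Each of the three parts corresponds to one of the factorization propositions, so the strategy is to identify the right factorization $A(u) = A_1 A_2$ with $A_1$ symmetric positive definite, and then invoke the appropriate result.

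For part (i), the entropy structure gives a strictly convex $h \in C^2(\dom)$ with $h''(u)A(u)$ positive definite. First I would set $A_1 = h''(u)^{-1}$, which is symmetric positive definite since $h$ is strictly convex (so $h''(u)$ is symmetric positive definite, and hence so is its inverse). Then $A_2 = h''(u)A(u)$ is positive definite by hypothesis, and $A(u) = A_1 A_2$. By Proposition \ref{prop.pdf} (the positive definite factorization), this factorization is equivalent to $A(u)$ being normally elliptic for each fixed $u \in \dom$. This is the easy direction and should follow immediately.

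For part (ii), I would use the same factorization $A(u) = A_1 A_2$ with $A_1 = h''(u)^{-1}$ symmetric positive definite and $A_2 = h''(u)A(u)$. The new hypotheses are that $A(u)$ is normally elliptic and that $A_2 = h''(u)A(u)$ is symmetric. This is precisely the situation of Proposition \ref{prop.sf} (ii): a normally elliptic product $A = A_1 A_2$ with $A_1$ symmetric positive definite and $A_2$ symmetric forces $A_2$ to be positive definite. Applying it gives that $h''(u)A(u)$ is positive definite for each $u$, which is exactly the entropy structure. The only subtlety is to make sure the symmetry and normal ellipticity hold pointwise for every $u \in \dom$, so that the proposition can be applied at each point.

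For part (iii), the entropy structure plus symmetry of $h''(u)A(u)$ means I again have $A(u) = A_1 A_2$ with $A_1 = h''(u)^{-1}$ symmetric positive definite and $A_2 = h''(u)A(u)$ both symmetric and (by the entropy structure) positive definite. Thus $A(u)$ is a product of two symmetric positive definite matrices, and Proposition \ref{prop.spdf} yields that $A(u)$ is normally elliptic and diagonalizable, which as noted right after that proposition is equivalent to being diagonalizable with positive eigenvalues. I do not anticipate a genuine obstacle here, since all the heavy lifting is done by the factorization propositions; the main point requiring care is simply the bookkeeping that $h''(u)^{-1}$ is symmetric positive definite whenever $h$ is strictly convex, and that all statements are understood pointwise in $u \in \dom$.
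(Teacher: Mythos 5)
Your proposal is correct and follows essentially the same route as the paper: the identical factorization $A(u)=A_1A_2$ with $A_1=h''(u)^{-1}$ symmetric positive definite and $A_2=h''(u)A(u)$, with parts (i), (ii), (iii) handled by Propositions \ref{prop.pdf}, \ref{prop.sf}~(ii), and \ref{prop.spdf}, respectively. Nothing is missing; the pointwise-in-$u$ bookkeeping you flag is exactly how the paper implicitly treats it.
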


\begin{proof}
The theorem follows from Propositions \ref{prop.pdf}, \ref{prop.sf} (ii), and
\ref{prop.spdf}.
We factorize $A(u)=A_1A_2$ with $A_1=h''(u)^{-1}$, which is symmetric positive definite,
and $A_2=h''(u)A(u)$.

(i) By assumption, $A_2$ is positive definite, so the result follows from
Proposition \ref{prop.pdf}. Another more elementary proof is given in
\cite[Lemma 3.2]{Jue17}.

(ii) As the matrix $A_2$ is assumed to be symmetric, Proposition \ref{prop.sf} (ii)
shows that $A_2$ is positive definite.

(iii) Proposition \ref{prop.spdf} implies that $A(u)$ is normally elliptic and
diagonalizable, which is equivalent to $A(u)$ being diagonalizable and having only
positive eigenvalues.
\end{proof}

\begin{remark}[Consequences]\label{rem.cons}\rm
The theorem can be used to determine whether an entropy structure exists.

(i) By Amann's result \cite[Section 1]{Ama90}, the normal ellipticity of $A(u)$
is a natural minimal condition for the local-in-time existence of smooth solutions.
If $A(u)$ is not normally elliptic, we cannot expect any entropy structure.

(ii) If $h''(u)A(u)$ is symmetric, so does the Onsager matrix $A(u)h''(u)^{-1}$.
The symmetry of the Onsager matrix is a natural condition imposed in general
systems consisting of irreversible thermodynamic processes.
If the application behind system \eqref{1.eq} should satisfy this principle,
we may calculate the entropy density by exploiting the symmetry of $h''(u)A(u)$;
see the examples below.

(iii) A simple check whether an entropy structure for \eqref{1.eq} exists with
a symmetric Onsager matrix is to compute the eigenvalues of $A(u)$. According to
Theorem \ref{thm.main}, if the diffusion matrix $A(u)$ is not diagonalizable with
positive eigenvalues, we cannot expect such a structure.
\qed
\end{remark}

We consider the following cases to detect an entropy structure.
Let $A(u)=A_1A_2$ or $A(u)=A_2A_1$, where $A_1$ is always symmetric positive definite.

\subsection*{Case 1.1.} Let $A(u)=A_1A_2$ and let $A_2$ be positive definite.
(According to Proposition \ref{prop.pdf}, $A(u)$ is
normally elliptic.) If we are able to find a function $h\in C^2(\dom)$
such that $h''(u)=A_1^{-1}$ (implying that $h$ is strictly convex),
then $A_2=h''(u)A(u)$ is positive definite, and
\eqref{1.eq} has an entropy structure.

As an example, we consider the Keller--Segel system with additional cross-diffusion:
\begin{equation}\label{2.ks}
  \pa_t u_1 = \diver(\na u_1-u_1 \na u_2), \quad
	\pa_t u_2 = \Delta u_1 + \delta\Delta u_2 + u_1 - u_2\quad\mbox{in }\Omega\subset\R^2,
\end{equation}
together with the initial and boundary conditions \eqref{1.bic}. The variables
$u_1$ and $u_2$ denote the cell density and the concentration of the chemical
signal, respectively. The parameter $\delta>0$ describes the strength of the
additional cross-diffusion. The classical parabolic-parabolic Keller--Segel model
is obtained when $\delta=0$. It is well known that this model has solutions
that blow up in finite time if $d\ge 2$ and the total mass is sufficiently
large \cite{CaCo08,CPZ04}.
System \eqref{2.ks} was suggested in \cite{HiJu11} to allow for global-in-time
solutions for any initial data.

The eigenvalues of $A(u)$ are $\lambda=1\pm \mathrm{i}\sqrt{\delta u_1}$, so
$A(u)$ is normally elliptic. We can factorize $A(u)=A_1A_2$ with
$$
  A_1 = \begin{pmatrix} u_1 & 0 \\ 0 & \delta \end{pmatrix}, \quad
	A_2 = \begin{pmatrix} 1/u_1 & -1 \\ 1 & 1/\delta \end{pmatrix},
$$
where $A_1$ is symmetric positive definite and $A_2$ is positive definite for $u_1>0$.
Then $h''(u)=A_1^{-1}$ and hence, $A_2=h''(u)A(u)$ is positive definite.
We can solve
$$
  h''(u) = \begin{pmatrix} 1/u_1 & 0 \\ 0 & 1/\delta \end{pmatrix}
$$
explicitly. Since $\pa^2 h/\pa u_1\pa u_2=0$, the entropy density is the
sum of $h_1(u_1)$ and $h_2(u_2)$ such that $h''_1(u_1)=1/u_1$, $h''_2(u_2)=1/\delta$.
This gives $h(u) = u_1(\log u_1-1)+u_2^2/(2\delta)$.

\subsection*{Case 1.2.} Let $A(u)=A_1A_2$ and let $A_2$ be symmetric.
(According to Proposition \ref{prop.sf}, $A(u)$ is
diagonalizable.) If we are able to find a function $h\in C^2(\dom)$
such that $h''(u)=A_1^{-1}$, then $A_2=h''(u)A(u)$ is symmetric. Thus, if
$A(u)$ is normally elliptic, Theorem \ref{thm.main} (ii) implies that \eqref{1.eq}
has an entropy structure, i.e., $h''(u)A(u)$ is positive definite.

To illustrate this result, we consider the $n$-species population model
\eqref{1.eq} with diffusion matrix \eqref{1.skt}.
The existence of global weak solutions was proved in \cite{CDJ18}
under the detailed-balance condition, i.e., there exist $\pi_1,\ldots,\pi_n>0$ such that
$$
  \pi_i a_{ij} = \pi_j a_{ji} \quad\mbox{for all }i\neq j.
$$
Under this condition, there exists a symmetric factorization $A(u)=A_1A_2$ with
$$
  (A_1)_{ij} = \frac{u_i}{\pi_i}\delta_{ij}, \quad
	(A_2)_{ij} = \frac{\pi_i}{u_i}\delta_{ij}\bigg(a_{i0} + \sum_{k=1}^n a_{ik}u_k\bigg)
	+ \pi_i a_{ij},
$$
where $i,j=1,\ldots,n$. Clearly, $A_1$ is symmetric positive definite if $u_i>0$,
while $A_2$ is symmetric. We set $h''(u)=A_1^{-1}$ and $A_2=h''(u)A(u)$.
We prove in Section \ref{sec.ne} that $A(u)$ is normally elliptic. Then
Theorem \ref{thm.main} (ii) shows that $h''(u)A(u)$ is positive definite, and
\eqref{1.eq} has an entropy structure. Clearly, the positive definiteness
of $h''(u)A(u)$ can be also verified directly; see \cite[Lemma 4]{CDJ18}.
We solve $h''(u)=A_1^{-1}$ by observing that $\pa^2 h/\pa u_i\pa u_j=0$ for
$i\neq j$ (so, $h(u)$ is the sum of some functions $h_i(u_i)$), and it follows that
$h''_i(u_i)=\pi_i/u_i$. We infer that $h(u)=\sum_{i=1}^n \pi_i u_i(\log u_i-1)$,
which is the entropy density suggested in \cite{CDJ18}.

\subsection*{Case 2.1.} Let $A(u)=A_2A_1$ and let $A_2$ be positive definite
(thus, $A(u)$ is normally elliptic). If $A_1=h''(u)$ and
$A_2=A(u)h''(u)^{-1}$ then \eqref{1.eq} has an entropy structure.

For instance, we wish to determine the entropy structure of the following system:
\begin{equation}\label{2.21}
  \pa_t u_1 = \frac12\Delta(u_1^2+u_3^2), \quad
	\pa_t u_2 = \frac12\Delta(u_1^2+u_2^2), \quad
	\pa_t u_3 = \frac12\Delta(u_2^2+u_3^2),
\end{equation}
together with the no-flux boundary conditions in \eqref{1.bic}.
This system is of the form
$\pa_t u = \Delta F(u)$, where $F:\dom\to\R^3$. The example was not considered
in the literature before. The diffusion matrix is given by
$$
  A(u) = \begin{pmatrix} u_1 & 0 & u_3 \\ u_1 & u_2 & 0 \\ 0 & u_2 & u_3 \end{pmatrix}.
$$
Then $A(u)=A_2A_1$, where
$$
  A_1 = \begin{pmatrix} u_1 & 0 & 0 \\ 0 & u_2 & 0 \\ 0 & 0 & u_3 \end{pmatrix}, \quad
	A_2 = \begin{pmatrix} 1 & 0 & 1 \\ 1 & 1 & 0 \\ 0 & 1 & 1 \end{pmatrix}.
$$
The matrix $A_2$ is positive definite and $A_1$ is symmetrix positive definite
if $u_i>0$ for $i=1,2,3$.
Then $h''(u)=A_1$, and $A_2=A(u)h''(u)^{-1}$ is positive definite, which provides the
entropy structure. The equation $h''(u)=A_1$ can be solved explicitly and
leads to the entropy density $h(u)=(u_1^3+u_2^3+u_3^3)/6$. Indeed, a formal
computation shows that along solutions to \eqref{2.21},
$$
  \frac{d}{dt}\int_\Omega h(u)dx + \frac12\sum_{i=1}^3\int_\Omega|\na u_i^2|^2 dx = 0.
$$

\subsection*{Case 2.2.} Let $A(u)=A_2A_1$ and $A_2$ be symmetric (then
$A(u)$ is diagonalizable).
If $h''(u)=A_1$, then $A_2=A(u)h''(u)^{-1}$ is symmetric. If $A(u)$ is also
normally elliptic, then $A(u)h''(u)^{-1}$ and consequently $h''(u)A(u)$
is positive definite, by Proposition \ref{prop.spdf}. We infer that \eqref{1.eq}
has an entropy structure.

As an example, consider the volume-filling model with diffusion matrix
$q_i=q$ in \eqref{1.vf}.  We assume that $q>0$, $q'>0$
and there exists a convex function $\chi$ such that $p_i=\exp(\pa\chi/\pa u_i)$
for $i=1,\ldots,n$.
Then $\pa p_i/\pa u_j=p_i\pa^2\chi/\pa u_i\pa u_j$ and consequently,
$$
  A_{ij}(u) = u_ip_i(u)q(u_0)\bigg(\frac{\delta_{ij}}{u_i} + \frac{q'(u_0)}{q(u_0)}
	+ \frac{\pa^2\chi}{\pa u_i\pa u_j}\bigg),
$$
recalling that $u_0=1-\sum_{i=1}^n u_i$. We can decompose $A(u)=A_2A_1$, where
$$
  (A_2)_{ij} = u_ip_i(u)q(u_0)\delta_{ij}, \quad
	(A_1)_{ij} = \frac{\delta_{ij}}{u_i} + \frac{q'(u_0)}{q(u_0)}
	+ \frac{\pa^2\chi}{\pa u_i\pa u_j}(u).
$$
Both $A_1$ and $A_2$ are symmetric positive definite for $u\in\dom$.
The entropy density can be computed from
$$
  \frac{\pa^2 h}{\pa u_i\pa u_j}(u) = \frac{\delta_{ij}}{u_i} + \frac{q'(u_0)}{q(u_0)}
	+ \frac{\pa^2\chi}{\pa u_i\pa u_j}(u)
$$
by integration, which leads, up to unimportant linear terms, to
$$
  h(u) = \sum_{i=1}^n u_i(\log u_i-1) + \int_a^{u_0}\log q(s)ds + \chi(u),
	\quad u\in\dom,
$$
where $a>0$. This is the same entropy density as used in \cite{ZaJu17}.


\section{Application: Perturbations}\label{sec.pdf}

We show some applications of Propositions \ref{prop.pdf} and \ref{prop.sf} (ii).
In particular, we
analyze perturbations of symmetric Onsager matrices and of constant diffusion matrices.

\begin{proposition}[Perturbation of $h''(u)A(u)$]\label{prop.pert}
Let $A(u)$ be normally elliptic uniformly in $\dom$ and diagonalizable.
Assume that there exists a strictly convex function $h\in C^2(\dom)$ such that
$h''(u)A(u) = S(u) + \eps N(u)$, where $S(u)$ is symmetric, $\eps>0$, and
$N$ is bounded in $\dom$. We also suppose that the eigenvalues of $S$ are bounded
in $\dom$ and the condition number
$\|A(u)\|\,\|A(u)^{-1}\|$ and $\|h''(u)^{-1}N(u)\|$ are bounded in $\dom$,
where the matrix norm is induced by the absolute norm in $\C^n$.
Then there exists $\eps_0>0$ such that
for all $0<\eps<\eps_0$, \eqref{1.eq} has an entropy structure.
\end{proposition}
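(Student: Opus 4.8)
The plan is to reduce the statement to a uniform positive lower bound on the symmetric matrix $S$ and then to absorb the skew perturbation $\eps N$. Since $h$ is strictly convex, $A_1:=h''(u)^{-1}$ is symmetric positive definite, and as $S(u)$ is symmetric the symmetric part of $h''(u)A(u)=S(u)+\eps N(u)$ equals $S(u)+\frac{\eps}{2}(N(u)+N(u)^\top)$. Thus $h''(u)A(u)$ is positive definite exactly when this symmetric part is, and for every $z\in\R^n$ one has $z^\top(S+\eps N)z\ge(\lambda_{\min}(S(u))-\eps\|N(u)\|)|z|^2$. It therefore suffices to find a constant $c>0$ with $\lambda_{\min}(S(u))\ge c$ for all $u\in\dom$: any $0<\eps_0\le c/\sup_{\dom}\|N\|$, finite because $N$ is bounded, then forces $h''(u)A(u)$ to be positive definite for $0<\eps<\eps_0$. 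Only the uniform positive definiteness of $S$ is genuinely at issue.

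Second, I would establish $S(u)\succ 0$ through the factorization machinery of Section \ref{sec.fac}. The matrix $\tilde A(u):=A_1S(u)=A(u)-\eps\,h''(u)^{-1}N(u)$ is a symmetric factorization in the sense of Proposition \ref{prop.sf}, with $A_1=h''(u)^{-1}$ symmetric positive definite and $S$ symmetric; hence $\tilde A$ is diagonalizable with real spectrum by Proposition \ref{prop.sf}(i), and by Sylvester's inertia theorem it has the same inertia as $S$. Consequently $S(u)$ is positive definite if and only if $\tilde A(u)$ is normally elliptic, and it is enough to show the latter for small $\eps$. Now $A$ is uniformly normally elliptic, so $\operatorname{Re}\sigma(A(u))\ge\delta$ for some $\delta>0$, while $\tilde A(u)$ differs from $A(u)$ by a perturbation of norm at most $\eps\sup_{\dom}\|h''^{-1}N\|<\infty$. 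Applying the Bauer--Fike theorem in the stated absolute norm, with $A$ as diagonalizable reference matrix, every eigenvalue of $\tilde A$ lies within a fixed multiple of $\eps$ of $\sigma(A)$. Since a non-positive real eigenvalue of $\tilde A$ would then have to sit within that small distance of a point of $\sigma(A)$, all of which have real part at least $\delta$, this is impossible once $\eps$ is small; hence $\tilde A$ is normally elliptic and $S(u)\succ0$.

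The delicate point, and the step I expect to be the main obstacle, is to make the spectral gap for $S$ and the threshold $\eps_0$ uniform across all of $\dom$. The constant in the Bauer--Fike estimate is the condition number of the eigenvector matrix of $A$, which in general is unrelated to the condition number $\|A\|\,\|A^{-1}\|$ and may blow up even while the spectra stay separated. The remaining hypotheses are tailored to control it: because $\tilde A=A_1S$ is self-adjoint for the inner product induced by $A_1^{-1}=h''$, its eigenvectors are orthogonal in that metric, so the eigenvector conditioning of the nearby matrix $A$ is governed by $A_1=h''^{-1}$; the boundedness of $\|A\|\,\|A^{-1}\|$ together with the boundedness of the eigenvalues of $S$ then pins this conditioning down uniformly in $u$, through the Ostrowski/Sylvester-type eigenvalue-product bounds that relate $\sigma(A_1S)$ to the eigenvalues of $A_1$ and $S$. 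Carrying this bookkeeping through yields a single $\eps_0>0$ for which $\lambda_{\min}(S(u))\ge c>0$ holds uniformly; the reduction of the first paragraph then gives positive definiteness of $h''(u)A(u)$ throughout $\dom$, that is, the entropy structure.
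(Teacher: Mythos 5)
Your first two paragraphs are, modulo ordering, exactly the paper's proof. The paper likewise writes $h''(u)^{-1}S(u)=A(u)-\eps\, h''(u)^{-1}N(u)$, applies the Bauer--Fike theorem with the diagonalizable matrix $A(u)$ as reference to conclude that $h''(u)^{-1}S(u)$ is normally elliptic for small $\eps$, factorizes $h''(u)^{-1}S(u)=A_1A_2$ with $A_1=h''(u)^{-1}$ symmetric positive definite and $A_2=S(u)$ symmetric to conclude that $S(u)$ is positive definite (the paper cites Proposition \ref{prop.sf}\,(ii); you use part (i) plus Sylvester's inertia theorem, which is the same mechanism), and then absorbs $\eps N$ exactly as in your first paragraph, taking $\eps_0$ as the minimum of the two thresholds. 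Up to that point your argument is correct, at the same level of rigor as the paper's.

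The divergence is your third paragraph, and there you have a genuine gap. The paper does not attempt what you attempt: it reads the hypothesis literally, inserting $\eps\|A(u)\|\,\|A(u)^{-1}\|\,\|h''(u)^{-1}N(u)\|\le\eps C_1$ as the Bauer--Fike bound, and it obtains the uniform lower bound $z^\top S(u)z\ge\kappa|z|^2$ from the assumption on the eigenvalues of $S$; its $\eps_0$ is therefore uniform by hypothesis. You are right that the constant in Bauer--Fike is the condition number of a diagonalizing similarity of $A(u)$, not $\|A(u)\|\,\|A(u)^{-1}\|$ --- that is a fair criticism of the citation as used in the paper --- but your proposed repair does not work as sketched. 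Eigenvector conditioning does not transfer between nearby matrices: the diagonalizing matrix of $A=\tilde A+\eps\, h''^{-1}N$ can be arbitrarily ill-conditioned even though $\tilde A=h''^{-1}S$ has an eigenbasis orthogonal in the $h''$-inner product (this happens whenever eigenvalues of $A$ nearly collide). Moreover, even for $\tilde A$ itself, the Euclidean condition number of that eigenbasis is $\sqrt{\kappa(h''(u))}$, and nothing in the hypotheses bounds the condition number of $h''(u)$; likewise, the Ostrowski-type step you invoke to pass from a spectral gap for $\tilde A$ to $\lambda_{\min}(S(u))\ge c$ requires $\|h''(u)^{-1}\|$ to be bounded above, which is also not assumed. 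So the uniformity issue you correctly single out as the main obstacle is not actually closed by your argument; in the paper it is closed only by building the needed quantity directly into the hypotheses.
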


\begin{proof}
Since $A(u)$ is diagonalizable, we can apply the Bauer--Fike theorem
\cite[Theorem 6.3.2]{HoJo13}: Let $\lambda(u)$ be an eigenvalue of $A(u)$
and $\mu(u)$ be an eigenvalue of $h''(u)^{-1}S(u)$. Then
$$
  \operatorname{Re}(\lambda(u))-\operatorname{Re}(\mu(u))
	\le |\lambda(u)-\mu(u)| \le  \eps\|A(u)\|\,\|A(u)^{-1}\|\,\|h''(u)^{-1}N(u)\|
	\le \eps C_1,
$$
where $C_1>0$ does not depend on $u\in\dom$. By assumption, there exists
$\lambda^*>0$ such that $\operatorname{Re}(\lambda(u))\ge\lambda^*$. Therefore,
we have $\operatorname{Re}(\mu(u))\ge\lambda^*/2$ for all
$0<\eps<\eps_1=\lambda^*/(2C_1)$. Thus, $h''(u)^{-1}S(u)$ is normally elliptic.
Moreover, we can decompose $h''(u)^{-1}S(u) = A_1A_2$, where $A_1=h''(u)^{-1}$
is symmetric positive definite and $A_2=S(u)$ is symmetric. We deduce from
Proposition \ref{prop.sf} (ii) that $S(u)$ is positive definite. By assumption on the
eigenvalues of $S$, there exists $\kappa>0$ such that for all
$z^\top S(u)z\ge\kappa|z|^2$. Thus, for all $z\in\R^n$,
$$
  z^\top h''(u)A(u)z = z^\top S(u)z + \eps z^\top N(u)z \ge (\kappa-\eps K)|z|^2,
$$
where $K=\|N(u)\|$. Thus, if $0<\eps<\eps_2<\kappa/K$, the matrix $h''(u)A(u)$
is positive definite, and the proof is finished after setting
$\eps_0=\min\{\eps_1,\eps_2\}>0$.
\end{proof}

\begin{remark}\rm
In cross-diffusion systems with volume filling, $h''(u)^{-1}$ may be uniformly bounded.
As an example, let $h(u)=\sum_{i=1}^3 u_i(\log u_i-1)$, where $u\in\dom=\{u\in\R_+^2:
\sum_{i=1}^2 u_i<1\}$ and $u_3=1-u_1-u_2$. Then
$$
  h''(u)^{-1} = \begin{pmatrix}
	u_1(u_2+u_3) & -u_1u_2 \\ -u_1u_2 & u_2(u_1+u_3) \end{pmatrix}
$$
is indeed bounded in $\dom$. Consequently, if $N(u)$ is bounded, so does
$h''(u)^{-1}N(u)$, which is one of the assumptions in Proposition \ref{prop.pert}.
\qed
\end{remark}

For a constant diffusion matrix, normal ellipticity and the existence of an
entropy structure are equivalent.

\begin{proposition}[Constant diffusion matrix]\label{prop.const}
If $A\in\R^{n\times n}$ is normally elliptic then \eqref{1.eq} has an entropy structure
and vice versa.
\end{proposition}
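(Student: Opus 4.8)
The plan is to treat the two implications separately; the forward one is essentially free and the reverse one reduces to the Lyapunov theorem. For the direction ``entropy structure $\Rightarrow$ normal ellipticity'', I would simply invoke Theorem \ref{thm.main} (i): any system \eqref{1.eq} possessing an entropy structure has a normally elliptic diffusion matrix for every $u\in\dom$, and since $A$ is constant this says precisely that $A$ itself is normally elliptic. No additional work is needed in this direction.

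The substance lies in the reverse implication, so assume $A$ is normally elliptic. By the Lyapunov theorem (equivalently, by the construction used in the proof of Proposition \ref{prop.pdf} with $G=I$) there exists a symmetric positive definite matrix $H$ with $HA+A^\top H=I$. The key observation, which is exactly where I would use that $A$ is independent of $u$, is that I may take the entropy density to be the quadratic function $h(u)=\tfrac12 u^\top H u$. Then $h''(u)=H$ is constant, symmetric, and positive definite, so $h$ is strictly convex on all of $\dom$.

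It then remains to verify that $h''(u)A=HA$ is positive definite. Since $H$ is symmetric, $(HA)^\top=A^\top H$, and therefore $HA+(HA)^\top=HA+A^\top H=I$, which is symmetric positive definite. Hence $z^\top HA\,z>0$ for all $z\neq 0$, i.e.\ $h''(u)A$ is positive definite in the sense of the paper, and \eqref{1.eq} has an entropy structure with this quadratic entropy.

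I do not expect a genuine obstacle here; the only point worth highlighting is \emph{why} the argument is special to constant $A$. For a $u$-dependent matrix $A(u)$ one would want a $u$-dependent solution $H(u)$ of the Lyapunov equation, but $H(u)$ would then have to be realized as the Hessian $h''(u)$ of a single scalar function, which imposes the integrability (symmetry of mixed third derivatives) constraints treated in the later sections. For constant $A$ a constant $H$ suffices and this Hessian constraint is vacuous, so the Lyapunov theorem alone closes the argument.
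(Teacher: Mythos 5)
Your proof is correct and follows essentially the same route as the paper: the forward direction cites Theorem \ref{thm.main} (i), and the reverse direction uses the quadratic entropy $h(u)=\tfrac12 u^\top Hu$ with $H$ symmetric positive definite solving $HA+A^\top H=I$. The paper phrases this via Proposition \ref{prop.pdf} (taking $H=A_1^{-1}$ from the factorization $A=A_1A_2$), but since that proposition's proof is precisely the Lyapunov construction you invoke, the two arguments are the same in substance.
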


\begin{proof}
By Theorem \ref{thm.main} (i), an entropy structure implies that $A$ is normally
elliptic. Conversely, if $A$ is normally elliptic, by Proposition \ref{prop.pdf},
there exists a symmetric positive definite matrix $A_1$ and a positive definite
matrix $A_2$ such that $A=A_1A_2$. Defining the entropy density
$h(u)=\frac12 u^\top Hu$ with $H:=A_1^{-1}$, we infer that $h''(u)A=HA=A_2$
is positive definite.
\end{proof}

\begin{remark}[Explicit formula for $H$]\rm
The matrix $H$ appearing in the entropy density $h(u)=\frac12u^\top Hu$ can be
constructed explicitly. By the Lyapunov theorem, there exists a unique
symmetric positive definite matrix $H\in\R^{n\times n}$
such that $HA+A^\top H=I$. Then $HA+(HA)^\top=HA+A^\top H=I$ is symmetric
positive definite, i.e., $h''(u)A=HA$ is positive definite. According to
\cite[Problem 9, Section 2.2]{HoJo91}, it follows that
$$
  H = \int_0^\infty e^{-A^\top t}e^{-At} dt.
$$
An interesting consequence from this formula is that
$$
  \det H = \int_0^\infty\det(e^{-At})\det(e^{-At})dt
	= \int_0^\infty\det(e^{-2At})dt
	= \int_0^\infty e^{-2\operatorname{tr}(A)t}dt = \frac{1}{2\operatorname{tr}(A)},
$$
where we used the property $\det(e^{-2At})=e^{-2\operatorname{tr}(A)t}$
\cite[Theorem 2.12]{Hal15}.
\qed
\end{remark}

Proposition \ref{prop.const} can be slightly generalized to the sum of a
constant matrix and a nonlinear perturbation. Note that we do not assume that
$A(u)$ is diagonalizable.

\begin{proposition}[Perturbations of constant diffusion matrices]\label{prop.pertcon}
Let $A_0\in\R^{n\times n}$ be a constant normally elliptic matrix.

{\rm (i)} Let $A(u)=A_0+p(u)I$, where $p(u)$ is a positive scalar function.
Then \eqref{1.eq} has an entropy structure.

{\rm (ii)} Let $A(u)=A_0+\eps A_1(u)$, where $A_1(u)$ is a bounded matrix in $\dom$
and $\eps>0$.
Then there exists $\eps_0>0$ such that for all $0<\eps<\eps_0$, \eqref{1.eq}
has an entropy structure.
\end{proposition}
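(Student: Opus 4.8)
The plan is to handle both parts with a single, fixed quadratic entropy density built from the unperturbed matrix $A_0$. Since $A_0$ is normally elliptic, the Lyapunov theorem (part (ii)) provides a symmetric positive definite matrix $H\in\R^{n\times n}$ with $HA_0+A_0^\top H=I$. As in the proof of Proposition \ref{prop.const}, I would set $h(u)=\frac12 u^\top Hu$, so that $h''(u)=H$ is constant, symmetric, and positive definite; hence $h$ is strictly convex. The only thing left to verify in each case is that $h''(u)A(u)=HA(u)$ is positive definite for the given $A(u)$, uniformly in $u\in\dom$, which amounts to showing that the symmetric part $HA(u)+A(u)^\top H$ is positive definite.

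For part (i) with $A(u)=A_0+p(u)I$, I would compute directly
$$
  HA(u)+A(u)^\top H = HA_0+A_0^\top H + 2p(u)H = I + 2p(u)H,
$$
using that $H$ is symmetric and $p(u)$ is scalar. Because $p(u)>0$ and $H$ is positive definite, the term $2p(u)H$ is positive definite, and so is $I$; thus $HA(u)+A(u)^\top H$ is positive definite, i.e.\ $HA(u)$ is positive definite. Note that no boundedness of $p$ is required here, since the perturbation only strengthens positivity.

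For part (ii) with $A(u)=A_0+\eps A_1(u)$, the same computation gives
$$
  HA(u)+A(u)^\top H = I + \eps\big(HA_1(u)+A_1(u)^\top H\big).
$$
Setting $C:=\sup_{u\in\dom}\|HA_1(u)+A_1(u)^\top H\|$, which is finite because $H$ is fixed and $A_1$ is bounded, I would estimate for $z\in\R^n$ that $z^\top(HA(u)+A(u)^\top H)z\ge(1-\eps C)|z|^2$, exploiting that the perturbation matrix is symmetric so that its quadratic form is bounded below by $-C|z|^2$. Choosing $\eps_0:=1/C$ (any $\eps_0>0$ if $C=0$) then yields positive definiteness of $HA(u)$ for all $0<\eps<\eps_0$, giving the entropy structure. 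I expect no serious obstacle here: the only real point is the recognition that a single Lyapunov matrix $H$ works uniformly in $u$, so that the quadratic entropy from the constant case survives a bounded perturbation of the symmetric part. In particular, in contrast to Proposition \ref{prop.pert}, this argument uses neither the Bauer--Fike theorem nor the diagonalizability of $A(u)$, consistent with the remark preceding the proposition.
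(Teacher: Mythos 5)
Your proof is correct and follows essentially the same route as the paper: both use the quadratic entropy $h(u)=\frac12 u^\top Hu$ with $H$ the Lyapunov solution for $A_0$ (as in Proposition \ref{prop.const}), observe in (i) that the perturbation $p(u)H$ only adds a positive definite term, and in (ii) absorb the bounded perturbation by choosing $\eps$ small. Working with the symmetrized form $HA(u)+A(u)^\top H$ rather than the quadratic form $z^\top HA(u)z$ directly is only a cosmetic difference.
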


\begin{proof}
(i) By Proposition \ref{prop.const}, there exists a symmetric positive definite
matrix $H\in\R^{n\times n}$ such that $h''(u)A_0$ is positive definite, where
$h(u)=\frac{1}{2}u^\top Hu$. Then $h''(u)A(u)=HA_0+p(u)H$ is positive definite as the sum
of two positive definite matrices.

(ii) We know from part (i) that $HA_0$ is positive definite, where $H$ is symmetric
positive definite.
Thus, there exists $\lambda>0$ such that $z^\top HA_0z\ge\lambda|z|^2$ for all
$z\in\R^n$. Since $A_1(u)$ is bounded, there exists $M>0$ such that
$\|HA_1(u)\|\le M$ for all $u\in\dom$. We conclude that
$z^\top h''(u)A(u)z\ge (\lambda-\eps M)|z|^2$ for $z\in\R^n$,
and the positive definiteness follows after choosing $0<\eps<\eps_0<\lambda/M$.
\end{proof}


\section{Application: Sum of single-species entropy densities}\label{sec.spdf}

When the entropy density $h(u)$ can be written as the sum of functions
depending on $u_i$, we can give an easy criterion for the positive definiteness
of $h''(u)A(u)$, avoiding the computation of the eigenvalues of $A(u)$ in order
to check the normal ellipticity.

\begin{proposition}\label{prop.lpm}
If there exists a strictly convex function $h(u)=\sum_{i=1}^n h_i(u_i)$ for
some functions $h_i\in C^2(\dom)$ such that $h''(u)A(u)$ is symmetric, then
$h''(u)A(u)$ is positive definite if and only if all leading principal minors
of $A(u)$ are positive.
\end{proposition}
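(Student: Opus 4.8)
The plan is to reduce the claim to Sylvester's criterion for symmetric matrices by exploiting the diagonal structure of $h''(u)$. First I would observe that, because $h(u)=\sum_{i=1}^n h_i(u_i)$ with each $h_i\in C^2(\dom)$, the Hessian $h''(u)=\operatorname{diag}(h_1''(u_1),\ldots,h_n''(u_n))$ is diagonal, and strict convexity forces $h_i''(u_i)>0$ for every $i$. Thus $D:=h''(u)$ is a diagonal matrix with strictly positive diagonal entries $d_1,\ldots,d_n$, and in particular $D$ is symmetric positive definite.

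Since $M:=h''(u)A(u)=DA(u)$ is symmetric by hypothesis, Sylvester's criterion tells us that $M$ is positive definite if and only if all its leading principal minors are positive. So the whole proposition follows once I relate the leading principal minors of $M$ to those of $A(u)$. The key computation is that, because $D$ is diagonal, the entries satisfy $M_{ij}=d_iA_{ij}(u)$, so for each $k$ the $k\times k$ leading principal submatrix of $M$ factors as $M_k=D_kA_k$, where $D_k=\operatorname{diag}(d_1,\ldots,d_k)$ and $A_k$ is the $k\times k$ leading principal submatrix of $A(u)$.

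Taking determinants gives $\det M_k=(d_1\cdots d_k)\det A_k$. Since $d_1\cdots d_k>0$, the $k$th leading principal minor of $M$ and that of $A(u)$ have the same sign, for every $k=1,\ldots,n$. In particular, all leading principal minors of $M$ are positive if and only if all leading principal minors of $A(u)$ are positive. Combining this equivalence with Sylvester's criterion applied to the symmetric matrix $M$ yields the desired statement.

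The main step, and really the only substantive point, is the factorization $M_k=D_kA_k$ of the leading principal submatrices; everything else is a direct application of Sylvester's criterion. I expect no genuine obstacle here, since the diagonal structure of $h''(u)$ makes the minor computation transparent. The one thing to record carefully is that the strict convexity of $h$, together with its additive form, is exactly what guarantees the positivity of each $d_i$ and hence of the factors $d_1\cdots d_k$, which is what allows us to transfer the sign of the minors from $M$ to $A(u)$ without change.
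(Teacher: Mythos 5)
Your proof is correct and takes essentially the same route as the paper: both reduce the claim to Sylvester's criterion for the symmetric matrix $h''(u)A(u)$ and use the identity $\det M_k=\prod_{i=1}^k h_i''(u_i)\,\det A_k$ for the leading principal minors (your $M_k=D_kA_k$ factorization) to transfer positivity of the minors between $h''(u)A(u)$ and $A(u)$. The only point to flag is that your assertion that strict convexity forces $h_i''(u_i)>0$ (rather than merely $\ge 0$) is exactly the implicit convention the paper also adopts, so nothing of substance separates the two arguments.
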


\begin{proof}
It holds that $h''(u)A(u)=(h_i''(u_i)A_{ij}(u))\in\R^{n\times n}$.
Let $M_k$ be the $k$th leading principal minor of $h''(u)A(u)$ and let
$A_k\in\R^{k\times k}$ be the leading principal submatrix of order $k$ of $A(u)$. Then
$$
  M_k = \prod_{i=1}^k h''_i(u_i)\det(A_k).
$$
Thus, if $h''(u)A(u)$ is symmetric positive definite, then
$M_k>0$ for all $k=1,\ldots,n$, by Sylvester's criterion
and hence $\det(A_k)>0$ for all $k=1,\ldots,n$. On the other hand,
if $\det(A_k)>0$ then $M_k>0$ for all $k=1,\ldots,n$, and we conclude from
the symmetry of $h''(u)A(u)$, by Sylvester's criterion again,
that $h''(u)A(u)$ is positive definite.
\end{proof}

The symmetry of $h''(u)A(u)$ implies that $h_i''(u_i)A_{ij}(u)=h_j''(u_j)A_{ji}(u)$
for all $i\neq j$. Since $h''_i(u_i)>0$, this shows that both $A_{ij}(u)$
and $A_{ji}(u)$ are positive, negative, or zero for any $i\neq j$.
We apply Proposition \ref{prop.lpm} to various examples.

\subsection*{Construction of entropies in two-species systems.}
We construct convex entropy densities $h(u)=h_1(u_1)+h_2(u_2)$ for \eqref{1.eq} with
the diffusion matrix
$$
  A(u) = \begin{pmatrix} a_{11}(u) & b_1(u_1)b_2(u_2) \\
	c_1(u_1)c_2(u_2) & a_{22}(u) \end{pmatrix},
$$
where $a_{11}>0$, $b_1b_2c_1c_2>0$, and $\det A>0$ in $\dom$. The symmetry of
$h''(u)A(u)$ is equivalent to $b_1b_2h_1''=c_1c_2h_2''$ or $b_1h_1''/c_1=c_2h_2''/b_2$.
The left-hand side depends only on $u_1$, the right-hand side only on $u_2$.
Thus, both sides are constant and, say, equal to $k\in\R$. Since $h_1''$ and
$h_2''$ are positive, we may set $k=\operatorname{sign}(b_1(u)c_1(u))|k|$.
(Note that the sign of $b_1(u)c_1(u)$ must be the same for all $u\in\dom$.) Then
$$
  h_1(u_1) = |k|\int_{u_1^*}^{u_1}\int_{v_1^*}^{v_1}\bigg|\frac{c_1(s)}{b_1(s)}\bigg|
	ds dv_1, \quad
	h_2(u_2) = |k|\int_{u_2^*}^{u_2}\int_{v_2^*}^{v_2}\bigg|\frac{b_2(s)}{c_2(s)}\bigg|
	ds dv_2,
$$
at least if these integrals exist. Without loss of generality, we may choose $|k|=1$.
Our assumptions imply that the leading principal minors of $A(u)$, namely
$a_{11}(u)$ and $\det(A(u))$, are positive. Thus, by Proposition \ref{prop.lpm},
$h''(u)A(u)$ is positive definite, and \eqref{1.eq} has an entropy structure.

\subsection*{Construction of entropies in $n$-species systems.}
The idea for two-species systems can be extended to $n\times n$ matrices.
To simplify, we consider entropy densities $h(u)=\sum_{i=1}^n h_i(u_i)$ and diffusion
matrices of the form
\begin{equation}\label{3.nn}
  A(u) = \begin{pmatrix}
	a_{11}(u) & a_{12}u_1 & a_{13}u_1 & \cdots & a_{1n}u_1 \\
	a_{21}u_2 & a_{22}(u) & a_{23}u_2 & \cdots & a_{2n}u_2 \\
	a_{31}u_3 & a_{32}u_3 & a_{33}(u) & & a_{3n}u_3 \\
	\vdots & \vdots & & \ddots & \vdots \\
	a_{n1}u_n & a_{n2}u_n & a_{n3}u_n & \cdots & a_{nn}(u) \end{pmatrix},
\end{equation}
where $a_{ij}\in\R$ and $u\in\R^n_+$.
We assume that both $a_{ij}$ and $a_{ji}$ are positive, negative,
or zero for any $i\neq j$ and that the leading principal minors of $A(u)$ are positive.
Matrices like \eqref{3.nn} appear, for instance, in diffusive population dynamics;
see \eqref{1.skt}. The matrix $h''(u)A(u)$ is symmetric if and only if
$$
  h_i''(u_i)u_i a_{ij}(u) = h_j''(u_j)u_j a_{ji}(u)\quad\mbox{for all }
	u\in\dom,\ i,j=1,\ldots,n.
$$
Hence, there exist constants $k_{ij}\in\R$ such that
$$
  h_i''(u_i)u_i a_{ij} = h_j''(u_j)u_j a_{ji} = k_{ij}
	= \operatorname{sign}(a_{ij})|k_{ij}|.
$$

{\em Case 1.} Let $a_{ij}a_{ji}>0$ for any $1\le i<j\le n$. Then
\begin{align*}
  h_i''(u_i) &= \frac{k_{ij}}{a_{ij}u_i} = \frac{|k_{ij}|}{|a_{ij}|u_i}
	\quad\mbox{for }i+1\le j\le n, \\
	h_j''(u_j) &= \frac{k_{ij}}{a_{ji}u_j} = \frac{|k_{ij}|}{|a_{ji}|u_j}
	\quad\mbox{for }1\le i\le j-1.
\end{align*}
We introduce the numbers $\pi_i=|k_{ij}|/|a_{ij}|$ for $i+1\le j\le n$ and
$\pi_j=|k_{ij}|/|a_{ji}|$ for $1\le i\le j-1$. Then it holds that $\pi_i>0$
for $i=1,\ldots,n$ and $\pi_i|a_{ij}| = \pi_j|a_{ji}|$ for $1\le i<j\le n$.
Since we assume that $a_{ij}a_{ji}>0$ for $i<j$, this yields the
detailed-balance condition
$$
  \pi_i a_{ij} = \pi_j a_{ji} \quad\mbox{for }i\neq j,
$$
which has been already imposed in \cite{CDJ18} (but we allow for negative
values of $a_{ij}$ and $a_{ji}$).
This shows that $h_i''(u_i)=\pi_i/u_i$ and hence, the entropy density
becomes
$$
  h(u) = \sum_{i=1}^n h_i(u_i) = \sum_{i=1}^n\pi_i u_i(\log u_i-1).
$$
Note that the detailed-balance condition is equivalent to the symmetry of
$h''(u)A(u)$.

{\em Case 2.} To simplify the presentation, we assume that there exists
only one couple of indices $(i_0,j_0)$ with $1\le i_0<j_0\le n$ such that
$a_{i_0j_0}=a_{j_0i_0}=0$. Case 1 applies to all indices $(i,j)\neq (i_0,j_0)$,
while for $(i,j)=(i_0,j_0)$, we have
\begin{align*}
  \pi_{i_0} &= \frac{|k_{i_0j}|}{|a_{i_0j}|}, \quad j=i_0+1,\ldots,j_0-1,j_0+1,
	\ldots,n, \\
  \pi_{j_0} &= \frac{|k_{ij_0}|}{|a_{ij_0}|}, \quad i=1, \ldots,i_0-1,i_0+1,\ldots,
	j_0-1.
\end{align*}
The detailed-balance condition $\pi_{i_0}a_{i_0j_0}=\pi_{j_0}a_{j_0i_0}$
is automatically satisfied since $a_{i_0j_0}=a_{j_0i_0}=0$.
For example, if $n=3$ and $a_{12}=a_{21}=0$,
the detailed-balance condition reduces to the identities
$\pi_1a_{13}=\pi_3a_{31}$ and $\pi_2a_{23}=\pi_3a_{32}$, while
$\pi_1a_{12}=\pi_2a_{21}$ is no longer needed.


\section{Application: fluid mixture and population models}\label{sec.popul}

We construct entropy densities for diffusive fluid mixture and population systems.

\subsection*{Fluid models with partial pressure gradients}

Let us consider the fluid mixture model \eqref{1.pp} which has the diffusion
matrix $A_{ij}(u)=u_i\pa p_i/\pa u_j$ for $i,j=1,\ldots,n$. Essential
for the analysis of is the following detailed-balance condition:
There exist numbers $\pi_1,\ldots,\pi_n>0$ such that
\begin{equation}\label{4.dbc}
  \pi_i\frac{\pa p_i}{\pa u_j} = \pi_j\frac{\pa p_j}{\pa u_i} \quad\mbox{in }\dom
	\mbox{ for all }i\neq j,
\end{equation}
and we assume that $\dom\subset\R_+^n$.

\begin{proposition}\label{prop.pp}
Assume that the matrix $Q=(\pa p_i/\pa u_j)$ is normally elliptic
and that the detailed-balance condition \eqref{4.dbc} holds. Then
\eqref{1.pp} has an entropy structure with the entropy density
$h(u)=\sum_{i=1}^n\pi_i u_i(\log u_i-1)$.
\end{proposition}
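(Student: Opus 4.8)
The plan is to establish the entropy structure directly, by showing that $h''(u)A(u)$ is symmetric positive definite, and to do this I would push all the work onto the matrix $Q$ via Proposition \ref{prop.sf}~(ii) rather than onto $A(u)$.

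First I would compute the Hessian of the candidate entropy density. Since $h(u)=\sum_{i=1}^n\pi_i u_i(\log u_i-1)$ is a sum of single-variable functions, $h''(u)$ is diagonal with entries $\pa^2 h/\pa u_i^2=\pi_i/u_i$; on $\dom\subset\R_+^n$ with all $\pi_i>0$ this is symmetric positive definite, so $h$ is strictly convex. Writing $\Pi=\operatorname{diag}(\pi_1,\ldots,\pi_n)$ and using $A_{ij}(u)=u_i\,\pa p_i/\pa u_j=u_iQ_{ij}$, diagonality of $h''(u)$ gives $(h''(u)A(u))_{ij}=(\pi_i/u_i)\,u_iQ_{ij}=\pi_iQ_{ij}$, that is, $h''(u)A(u)=\Pi Q$. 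This is the computation that ties the entropy density to $Q$.

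The symmetry of $h''(u)A(u)$ is then read off from detailed balance: the $(i,j)$ and $(j,i)$ entries of $\Pi Q$ are $\pi_i\,\pa p_i/\pa u_j$ and $\pi_j\,\pa p_j/\pa u_i$, and condition \eqref{4.dbc} states exactly that they coincide for $i\neq j$, so $\Pi Q$ is symmetric. For positive definiteness I would factor $Q=\Pi^{-1}(\Pi Q)$, where $\Pi^{-1}=\operatorname{diag}(1/\pi_i)$ is symmetric positive definite and $\Pi Q$ is symmetric. This is a symmetric factorization of $Q$ in the sense of Proposition \ref{prop.sf}, and since $Q$ is assumed normally elliptic, part~(ii) of that proposition forces the symmetric factor $\Pi Q$ to be positive definite. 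Hence $h''(u)A(u)=\Pi Q$ is symmetric positive definite for every $u\in\dom$, which is precisely the asserted entropy structure.

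The point where one could go astray — and so the ``main obstacle'' — is the temptation to route the argument through $A(u)$ itself using Theorem \ref{thm.main}~(ii), which would first require proving that $A(u)=\operatorname{diag}(u_i)\,Q$ is normally elliptic; but the product of a symmetric positive definite matrix with a normally elliptic matrix need not be normally elliptic, so this is not automatic. The detailed-balance hypothesis is exactly what resolves this: once $\Pi Q$ is known to be positive definite, $A(u)=(\operatorname{diag}(u_i)\Pi^{-1})(\Pi Q)$ is a product of two symmetric positive definite matrices, hence normally elliptic and diagonalizable by Proposition \ref{prop.spdf}. That, however, is a byproduct; the clean route transfers the normal-ellipticity hypothesis to $Q$, where it actually lives, and never needs normal ellipticity of $A(u)$ as an input.
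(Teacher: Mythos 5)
Your proof is correct and follows essentially the same route as the paper: the paper also factorizes $Q=A_1A_2$ with $A_1=\operatorname{diag}(\pi_1^{-1},\ldots,\pi_n^{-1})$ symmetric positive definite and $A_2=(\pi_i\,\pa p_i/\pa u_j)=h''(u)A(u)$ symmetric by detailed balance, and then invokes Proposition \ref{prop.sf}~(ii) with the normal ellipticity of $Q$ to conclude that $A_2$ is positive definite. Your closing remarks (avoiding normal ellipticity of $A(u)$ as an input, and recovering it as a byproduct) match the paper's subsequent remark, which establishes the equivalence of the normal ellipticity of $A(u)$ and $Q$ via Sylvester's inertia theorem.
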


\begin{proof}
The normally elliptic matrix $Q$ factorizes according to $Q=A_1A_2$, where
$A_1=\operatorname{diag}(\pi_1^{-1},\ldots,\pi_n^{-1})$ is symmetric positive
definite and $A_2=(\pi_i\pa p_i/\pa u_j)$ is symmetric. Therefore, Proposition
\ref{prop.sf} (ii) shows that $h''(u)A(u)=A_2$ is positive definite.
\end{proof}

\begin{remark}[Alternative proof]\rm
We claim that the normal ellipticity of $A(u)$
is equivalent to that of $Q$. Then Proposition \ref{prop.pp} is
an immediate consequence of Theorem \ref{thm.main} (ii) since
$h''(u)A(u)=(\pi_i\pa p_i/\pa u_j)$ is assumed to be symmetric.
We use the inertia theorem of Sylvester
\cite[Section 1]{CaSc62}: If $A_1$ is symmetric positive definite and
$A_2$ is symmetric, then the inertia of $A_1A_2$ and $A_2$ coincide. As a consequence,
$A_1A_2$ and $A_2$ have the same number of eigenvalues with positive real parts.
We apply this result to $A_1=\operatorname{diag}(\pi_1^{-1},\ldots,\pi_n^{-1})$
and $A_2=(\pi_i\pa p_i/\pa u_j)$ to infer that $A_1A_2=Q$ and $A_2$ have the
same number of eigenvalues with positive real parts. The same argument applied
to $\widetilde A_1=\operatorname{diag}(u_1\pi_1^{-1},\ldots,u_n\pi_n^{-1})$ and
$A_2$ shows that also $\widetilde A_1A_2=A(u)$ and $A_2$ have the same number
of eigenvalues with positive part. This implies that the number of eigenvalues
with real parts of $A(u)$ and $Q$ coincide, proving the result.
\qed
\end{remark}


Interestingly, there exists a second entropy density.

\begin{proposition}[Second entropy]\label{prop.second}
Let $p_1,\ldots,p_n\in C^1(\dom)$ be defined on the simply connected set
$\dom\subset\R^n_+$ and assume that the detailed-balance condition \eqref{4.dbc} holds.
If $Q=(\pa p_i/\pa u_j)$ is invertible on $\dom$ then
\eqref{1.eq} has an entropy structure with an entropy density $h(u)$ satisfying
$\pa h/\pa u_i=\pi_ip_i$ for $i=1,\ldots,n$.
\end{proposition}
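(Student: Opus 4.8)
The plan is to construct $h$ as a potential of the vector field $(\pi_1p_1,\ldots,\pi_np_n)$ and then read off its convexity and the required positive definiteness from the same factorization used for \eqref{1.pp}. First I would reinterpret the detailed-balance condition \eqref{4.dbc} geometrically. Setting $F_i=\pi_ip_i$, the relations $\pi_i\,\pa p_i/\pa u_j=\pi_j\,\pa p_j/\pa u_i$ read $\pa F_i/\pa u_j=\pa F_j/\pa u_i$ for all $i\neq j$, i.e. the differential $1$-form $\omega=\sum_{i=1}^n\pi_ip_i\,du_i$ is closed. Since $p_i\in C^1(\dom)$ and $\dom\subset\R_+^n$ is simply connected, the Poincar\'e lemma yields a potential $h\in C^2(\dom)$ with $\pa h/\pa u_i=\pi_ip_i$ for $i=1,\ldots,n$. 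Differentiating once more gives $\pa^2h/\pa u_i\pa u_j=\pi_i\,\pa p_i/\pa u_j$, so that $h''(u)=\operatorname{diag}(\pi_1,\ldots,\pi_n)\,Q=:A_2$, which is symmetric precisely because \eqref{4.dbc} holds.

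Next I would exhibit the factorization of the diffusion matrix matching this Hessian. Writing $A(u)=(u_i\,\pa p_i/\pa u_j)=\operatorname{diag}(u_1,\ldots,u_n)\,Q$ and $A_2=\operatorname{diag}(\pi_1,\ldots,\pi_n)\,Q$, one checks at once that $A(u)=A_1A_2$ with $A_1:=\operatorname{diag}(u_1/\pi_1,\ldots,u_n/\pi_n)$, which is symmetric positive definite because $u_i,\pi_i>0$ on $\dom$. With $h''(u)=A_2$ this gives $h''(u)A(u)=A_2A_1A_2$, and for any $z\in\R^n$ with $z\neq 0$ we have $z^\top A_2A_1A_2z=(A_2z)^\top A_1(A_2z)$. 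Since $A_2=\operatorname{diag}(\pi_i)Q$ is invertible (as $Q$ is invertible on $\dom$), we have $A_2z\neq 0$, and the positive definiteness of $A_1$ forces this quantity to be strictly positive. Hence $h''(u)A(u)$ is positive definite on $\dom$.

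The delicate point, and what I expect to be the main obstacle, is the \emph{strict convexity} of $h$, i.e. the positive definiteness of $A_2=h''(u)$ itself. Invertibility of $Q$ only guarantees that $A_2$ has no zero eigenvalue; by congruence this already makes $h''(u)A(u)=A_2A_1A_2$ positive definite, but it does not fix the inertia of $A_2$. To pin this down I would invoke the factorization $A(u)=A_1A_2$ together with Proposition \ref{prop.sf}~(ii): since $A_1$ is symmetric positive definite, $A_2$ is symmetric, and $A(u)$ is normally elliptic, it follows that $A_2$ is positive definite. Here I would use that the normal ellipticity of $A(u)$ is equivalent to that of $Q$ (the Sylvester inertia argument recorded in the remark following Proposition \ref{prop.pp}), which is the standing hypothesis for the model \eqref{1.pp}. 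Once $A_2=h''(u)$ is positive definite, $h$ is strictly convex, and combined with the positive definiteness of $h''(u)A(u)$ established above, this is exactly the entropy structure claimed.
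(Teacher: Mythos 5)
Your first two paragraphs are the paper's proof, merely written in matrix rather than component notation: the paper likewise obtains $h$ from the Poincar\'e lemma and then computes $z^\top h''(u)A(u)z=\sum_{k}\pi_ku_k\bigl(\sum_j(\pa p_k/\pa u_j)z_j\bigr)^2$, which is exactly your congruence identity $z^\top A_2A_1A_2z=(A_2z)^\top A_1(A_2z)$ with $A_1=\operatorname{diag}(u_i/\pi_i)$ and $A_2=\operatorname{diag}(\pi_i)Q$; in both write-ups strict positivity then follows from the invertibility of $Q$.

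Your third paragraph is where you depart from the paper, and there you have put your finger on a genuine gap in the paper's own argument: its proof never verifies that $h$ is strictly convex, although the definition of entropy structure requires it, and invertibility of $Q$ alone cannot deliver it. In fact the proposition as literally stated fails: take $p_i(u)=-u_i$ and $\pi_i=1$, so that \eqref{4.dbc} holds vacuously and $Q=-I$ is invertible; then $h(u)=-\frac12\sum_iu_i^2$ is concave, $h''(u)A(u)=\operatorname{diag}(u_i)$ is still positive definite, but $A(u)=-\operatorname{diag}(u_i)$ is not normally elliptic, so by Theorem \ref{thm.main}(i) no entropy structure of any kind exists. Your repair via Proposition \ref{prop.sf}(ii) is mathematically sound, but note that normal ellipticity of $Q$ is \emph{not} among the hypotheses of Proposition \ref{prop.second}; it is the hypothesis of Proposition \ref{prop.pp}, and the point of the second-entropy statement (see the remark following it) is precisely to weaken normal ellipticity to invertibility. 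So what you have actually proved is a corrected proposition carrying the extra assumption that $Q$ is normally elliptic --- an assumption which, as your counterexample-hunting instinct correctly suggested, some version of is necessary. Under that assumption your detour through $A(u)$ and the Sylvester inertia remark is not needed: applying Proposition \ref{prop.sf}(ii) directly to the factorization $Q=\operatorname{diag}(\pi_i^{-1})\cdot(\pi_i\,\pa p_i/\pa u_j)$, as in the proof of Proposition \ref{prop.pp}, already shows that $h''(u)=\operatorname{diag}(\pi_i)Q$ is positive definite, i.e.\ that $h$ is strictly convex.
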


\begin{proof}
The proof is based on the Poincar\'e lemma for closed differential forms.
Since $(\pi_i p_i)$ defines a curl-free vector field in the sense
$\pa(\pi_ip_i)/\pa u_j=\pa(\pi_j p_j)/\pa u_i$ for all $i\neq j$, there
exists a function $h\in C^2(\dom)$ such that $\pa h/\pa u_i=\pi_ip_i$
for $i=1,\ldots,n$. It follows from the detailed-balance condition that
for all $z\in\R^n$,
\begin{align*}
  z^\top h''(u)A(u)z
	&= \sum_{i,j,k=1}^n\pi_i\frac{\pa p_i}{\pa u_k}u_k\frac{\pa p_k}{\pa u_j}z_iz_j
	= \sum_{i,j,k=1}^n\pi_ku_k\frac{\pa p_k}{\pa u_i}\frac{\pa p_k}{\pa u_j}z_iz_j \\
  &= \sum_{k=1}^n\pi_ku_k\bigg(\sum_{j=1}^n\frac{\pa p_k}{\pa u_j}z_j\bigg)^2\ge 0.
\end{align*}
Assume that $z^\top h''(u)A(u)z=0$ for $z\neq 0$. Since $u_k>0$, it follows that
$Qz=0$. However, $Q$ is invertible which implies that $z=0$, contradiction.
Thus, $z^\top h''(u)A(u)z>0$ for $z\neq 0$.
\end{proof}

Note that if the matrix $(\pa p_i/\pa u_j)$ is normally elliptic then it is
invertible. As an example, consider $p_i(u)=\sum_{j=1}^n a_{ij}u_j$
with coefficients $a_{ij}\ge 0$.
Then the Jacobian of $(p_1,\ldots,p_n)$ equals the matrix $(a_{ij})$. Thus,
if this matrix is invertible (and the detailed-balance condition holds),
Proposition \ref{prop.second} applies, and
\eqref{1.pp} with this choice has an entropy structure.
The entropy density can be constructed explicitly from
$\pa h/\pa u_i=\pi_i\sum_{j=1}^n a_{ij}u_j$ leading to
$$
  h(u) = \frac12\sum_{i,j=1}^n \pi_i a_{ij}u_iu_j.
$$
If $u$ is a (smooth) solution to \eqref{1.bic}, \eqref{1.pp} and
$\pa h/\pa u_i=\pi_i p_i$ for $i=1,\ldots,n$, it follows that
$$
  \frac{d}{dt}\int_\Omega h(u)dx
	+ \int_\Omega\sum_{i=1}^n\pi_i u_i|\na p_i(u)|^2 dx = 0,
$$
which yields slightly better integrability than the Boltzmann-type entropy density
from Proposition \ref{prop.pp}.


\subsection*{Population models}

We have considered the $n$-species SKT population model
with diffusion matrix \eqref{1.skt} already in Section \ref{sec.ent}.
Here, we study a more general version, given by
\begin{equation}\label{4.skt}
  \pa_t u_i = \Delta(u_i p_i(u)) = \diver(u_i\na p_i(u) + p_i(u)\na u_i), \quad
	i=1,\ldots,n,
\end{equation}
where $p_i(u)>0$ are transition rates from the underlying lattice model
\cite[Appendix A]{ZaJu17}. In the classical SKT model \eqref{1.skt},
we have $p_i(u)=a_{i0}+\sum_{j=1}^n a_{ij}u_j$.
Generally, the diffusion matrix has the elements $A_{ij}(u)=\delta_{ij}p_i(u)
+ u_i\pa p_i/\pa u_j$. Compared to the fluid model \eqref{1.pp}, system \eqref{4.skt}
contains the additional term $p_i(u)$ on the diagonal of the diffusion matrix.
Therefore, we have the same result as in Proposition \ref{prop.pp}.

\begin{corollary}[General SKT model]\label{coro.skt}
Let $p_1,\ldots,p_n:\dom\to\R_+$ be defined on the simply connected set
$\dom\subset\R^n_+$, let the matrix $Q=(\pa p_i/\pa u_j)$ be normally elliptic, and
let the detailed-balance condition \eqref{4.dbc} hold. Then
\eqref{4.skt} has an entropy structure with entropy density
$h(u)=\sum_{i=1}^n \pi_i u_i(\log u_i-1)$ for $u\in\dom$.
\end{corollary}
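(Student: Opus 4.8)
The plan is to mimic the proof of Proposition \ref{prop.pp}, the only difference being the extra diagonal term $p_i(u)$, which will turn out to be harmless. First I would record the entropy Hessian. Since $h(u)=\sum_{i=1}^n\pi_iu_i(\log u_i-1)$ gives $\pa h/\pa u_i=\pi_i\log u_i$, one finds $h''(u)=\operatorname{diag}(\pi_1/u_1,\ldots,\pi_n/u_n)$, which is symmetric positive definite on $\dom\subset\R^n_+$; in particular $h$ is strictly convex.

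Next I would compute $h''(u)A(u)$ from the entries $A_{ij}(u)=\delta_{ij}p_i(u)+u_i\,\pa p_i/\pa u_j$. Multiplying row $i$ by $\pi_i/u_i$ cancels the factor $u_i$ in the cross-diffusion part and yields
\begin{equation*}
  \big(h''(u)A(u)\big)_{ij} = \delta_{ij}\frac{\pi_ip_i(u)}{u_i} + \pi_i\frac{\pa p_i}{\pa u_j}(u).
\end{equation*}
Thus $h''(u)A(u)=D+S$, where $D=\operatorname{diag}\big(\pi_ip_i(u)/u_i\big)$ and $S=\big(\pi_i\,\pa p_i/\pa u_j\big)$. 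The detailed-balance condition \eqref{4.dbc} states exactly that $S$ is symmetric, so $h''(u)A(u)$ is symmetric.

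The key point is positivity of $S$, which is inherited from the normal ellipticity of $Q$ precisely as in Proposition \ref{prop.pp}: writing $Q=A_1S$ with $A_1=\operatorname{diag}(\pi_1^{-1},\ldots,\pi_n^{-1})$ symmetric positive definite and $S$ symmetric, Proposition \ref{prop.sf} (ii) forces $S$ to be positive definite. Finally, $D$ is positive definite because $\pi_i>0$, $p_i(u)>0$, and $u_i>0$ on $\dom$. Since the sum of two positive definite matrices is positive definite, $h''(u)A(u)=D+S$ is positive definite, which is exactly the entropy structure; by Theorem \ref{thm.main} (i) the diffusion matrix $A(u)$ is then automatically normally elliptic.

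I do not expect a genuine obstacle here: the whole argument is Proposition \ref{prop.pp} together with the observation that the additional diagonal contribution $D$ coming from the $\delta_{ij}p_i$ term can only help. The one thing to watch is the bookkeeping of the factor $u_i$ --- it is essential that multiplying by $h''(u)=\operatorname{diag}(\pi_i/u_i)$ clears the $u_i$ from the cross-diffusion part $u_i\,\pa p_i/\pa u_j$ while turning the diagonal $p_i$ into the strictly positive quantity $\pi_ip_i/u_i$. This is what makes $S$ independent of the densities and hence identifiable with the fluid-model Onsager matrix of Proposition \ref{prop.pp}, so that the normal ellipticity of $Q$ can be used verbatim.
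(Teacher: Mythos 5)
Your proof is correct and follows essentially the same route as the paper: decompose $h''(u)A(u)$ into the diagonal matrix $\operatorname{diag}(\pi_i p_i/u_i)$ plus $(\pi_i\,\pa p_i/\pa u_j)$, get positive definiteness of the latter exactly as in Proposition \ref{prop.pp} (via Proposition \ref{prop.sf}\,(ii) applied to the factorization of $Q$), and conclude since a sum of symmetric positive definite matrices is positive definite. The paper's proof is just a compressed version of this, citing Proposition \ref{prop.pp} for both ingredients.
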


\begin{proof}
The matrix $h''(u)A(u)$ is the sum of the diagonal matrix with positive entries
$\pi_iu_i^{-1}p_i$ and the matrix $(\pi_i\pa p_i/\pa u_j)$. Since it follows
from Proposition \ref{prop.pp} that both matrices are symmetric
positive definite, so does $h''(u)A(u)$.
\end{proof}

This idea can be generalized to cross-diffusion systems of the form
\begin{equation}\label{4.F}
  \pa_t u_i=\Delta F_i(u), \quad i=1,\ldots,n.
\end{equation}
The diffusion matrix is given by the elements $A_{ij}(u)=\pa F_i/\pa u_j$
for $i,j=1,\ldots,n$. We recover \eqref{4.skt} for $F_i(u)=u_ip_i(u)$.

\begin{proposition}
Let $F_1,\ldots,F_n\in C^1(\dom)$ be defined on the simply connected set
$\dom\subset\R^n$, let the Jacobian of $(F_1,\ldots,F_n)$ be invertible and
let the detailed-balance condition \eqref{4.dbc} with $p_i$ replaced by $F_i$ hold.
Then \eqref{4.F} has an entropy structure.
\end{proposition}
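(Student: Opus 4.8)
The plan is to follow the proof of Proposition \ref{prop.second} almost verbatim, replacing the fluid-mixture matrix $u_i\pa p_i/\pa u_j$ by the full Jacobian $A_{ij}(u)=\pa F_i/\pa u_j$. The starting point is to read the detailed-balance condition \eqref{4.dbc} (with $p_i$ replaced by $F_i$) as a closedness statement: for $i\neq j$ one has $\pa(\pi_iF_i)/\pa u_j=\pi_i\pa F_i/\pa u_j=\pi_j\pa F_j/\pa u_i=\pa(\pi_jF_j)/\pa u_i$, so the vector field $(\pi_1F_1,\ldots,\pi_nF_n)$ is curl-free on $\dom$. Since $\dom$ is simply connected, the Poincar\'e lemma for closed $1$-forms produces a potential $h\in C^2(\dom)$ with $\pa h/\pa u_i=\pi_iF_i$ for $i=1,\ldots,n$. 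This is the candidate entropy density.

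First I would compute the Hessian: differentiating $\pa h/\pa u_i=\pi_iF_i$ gives $h_{ij}''(u)=\pi_i\pa F_i/\pa u_j=\pi_iA_{ij}(u)$, that is, $h''(u)=DA(u)$ with $D=\operatorname{diag}(\pi_1,\ldots,\pi_n)$; the detailed-balance condition is exactly what renders this Hessian symmetric, as it must be. The heart of the argument is then the positive definiteness of $h''(u)A(u)$. For $z\in\R^n$ I would expand
$$
  z^\top h''(u)A(u)z=\sum_{i,j,k=1}^n\pi_iA_{ik}(u)A_{kj}(u)z_iz_j
$$
and use the detailed-balance identity $\pi_iA_{ik}=\pi_kA_{ki}$ to transfer the weight onto the summation index $k$, which collapses the double sum over $i,j$ into a square:
$$
  z^\top h''(u)A(u)z=\sum_{k=1}^n\pi_k\bigg(\sum_{i=1}^nA_{ki}(u)z_i\bigg)^2=\sum_{k=1}^n\pi_k\big(A(u)z\big)_k^2\ge 0.
$$
This rewriting as a weighted sum of squares is the one genuinely nontrivial step, and it is where the symmetry built into \eqref{4.dbc} is used decisively. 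Since every $\pi_k>0$, the form vanishes only when $A(u)z=0$, and the assumed invertibility of the Jacobian $A(u)$ then forces $z=0$; hence $h''(u)A(u)$ is positive definite on $\dom$, giving the entropy structure for \eqref{4.F}.

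Compared with Proposition \ref{prop.second}, the computation is in fact cleaner here, because the full Jacobian carries no extra factor $u_k$, so no positivity of the components $u_k$ is needed and a general $\dom\subset\R^n$ suffices. The only point requiring care is that $h$ be a legitimate entropy density: $h''(u)=DA(u)$ is symmetric, and by Sylvester's inertia theorem (as in the Remark following Proposition \ref{prop.sf}, applied to $A(u)=D^{-1}\,(DA(u))$) its inertia equals that of $A(u)$, so its definiteness is governed by the eigenvalues of $A(u)$. The essential structural content, and all that the entropy production $-d\mathcal{H}/dt$ requires, is the positive definiteness of $h''(u)A(u)$ established above, exactly as in Proposition \ref{prop.second}.
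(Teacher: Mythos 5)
Your proof is correct and takes essentially the same route as the paper's: the Poincar\'e lemma applied to the curl-free field $(\pi_1F_1,\ldots,\pi_nF_n)$ produces the potential $h$ with $\pa h/\pa u_i=\pi_iF_i$, and the identical detailed-balance rewriting $z^\top h''(u)A(u)z=\sum_{k=1}^n\pi_k\bigl(\sum_j(\pa F_k/\pa u_j)z_j\bigr)^2$ together with invertibility of the Jacobian yields positive definiteness. Your closing remark about $h$ being a legitimate entropy density even touches a point the paper's proof passes over in silence (strict convexity of $h$ is part of the paper's definition of entropy structure, and invertibility of the Jacobian alone does not guarantee it), so if anything you are slightly more careful than the original.
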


\begin{proof}
By assumption, $(\pi_1F_1,\ldots,\pi_nF_n)$ is curl-free. Hence, we deduce
from the Poin\-car\'e lemma for closed differential forms the existence of a function
$h:\dom\to\R$ such that $\pa h/\pa u_i=\pi_iF_i$ for $i=1,\ldots,n$. This implies
for all $z\in\R^n$, $z\neq 0$ that
$$
  z^\top h''(u)A(u)z
	= \sum_{i,j,k=1}^n \pi_i\frac{\pa F_i}{\pa u_k}\frac{\pa F_k}{\pa u_j}z_iz_j
	= \sum_{i,j,k=1}^n \pi_k\frac{\pa F_k}{\pa u_i}\frac{\pa F_k}{\pa u_j}z_iz_j
  = \sum_{k=1}^n\pi_k\bigg(\sum_{j=1}^n\frac{\pa F_k}{\pa u_j}z_j\bigg)^2.
$$
Since $(\pa F_k/\pa u_j)$ is invertible, we infer as in the proof of Proposition
\ref{prop.second} that the previous expression is positive for all $z\neq 0$.
\end{proof}


\section{Normal ellipticity}\label{sec.ne}

We prove the normal ellipticity of the diffusion matrices associated to the
three models introduced in the introduction.

\subsection*{SKT model}

The normal ellipticity of the two-species SKT model
with diffusion matrix \eqref{1.skt} was proved in \cite[Section 17.1]{Ama93}.
Surprisingly, this property is not known for the $n$-species model. Under the
detailed-balance condition
$\pi_i a_{ij}=\pi_j a_{ji}$ for $i\neq j$, the existence of an entropy structure
was shown in \cite[Lemma 4]{CDJ18}, so that in this case $A(u)$ is normally
elliptic by Theorem \ref{thm.main} (i). In the following, we prove that
this property also holds when the detailed-balance condition is not valid.

\begin{lemma}\label{lem.sktne}
Define the matrix $A(u)\in\R^{n\times n}$ by \eqref{1.skt}, i.e.
$$
  A_{ii}(u) = a_{i0} + 2a_{ii}u_i + \sum_{j\neq i}a_{ij}u_j, \quad
	A_{ij}=a_{ij}u_i\quad\mbox{for }i\neq j.
$$
If $a_{i0}\ge 0$, $a_{ij}\ge 0$ with $a_{i0}+a_{ii}>0$ for $i,j=1,\ldots,n$,
then $A(u)$ is normally elliptic for any $u\in\R_+^n$.
\end{lemma}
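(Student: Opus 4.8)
The plan is to exhibit, for each fixed $u\in\R_+^n$, a diagonal similarity transformation that turns $A(u)$ into a matrix to which the Gershgorin disk theorem applies directly. Since similar matrices have the same spectrum, it suffices to locate the eigenvalues of $\widetilde A:=U^{-1}A(u)U$, where $U:=\operatorname{diag}(u_1,\dots,u_n)$ is invertible because $u_i>0$. Conjugation by the diagonal matrix $U$ leaves the diagonal of $A(u)$ unchanged while replacing each off-diagonal entry $A_{ij}=a_{ij}u_i$ by $u_i^{-1}(a_{ij}u_i)u_j=a_{ij}u_j$. Thus $\widetilde A$ has diagonal entries $A_{ii}(u)=a_{i0}+2a_{ii}u_i+\sum_{j\neq i}a_{ij}u_j$ and off-diagonal entries $a_{ij}u_j$ for $i\neq j$.

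Next I would apply the (row) Gershgorin circle theorem \cite[Section 6.1]{HoJo13} to $\widetilde A$. The $i$th Gershgorin disk is centered at the real number $A_{ii}(u)$ and has radius
$$
  R_i=\sum_{j\neq i}|a_{ij}u_j|=\sum_{j\neq i}a_{ij}u_j,
$$
where I used $a_{ij}\ge0$ and $u_j>0$. Its leftmost point therefore has real part
$$
  A_{ii}(u)-R_i=\Big(a_{i0}+2a_{ii}u_i+\sum_{j\neq i}a_{ij}u_j\Big)-\sum_{j\neq i}a_{ij}u_j=a_{i0}+2a_{ii}u_i.
$$
The hypothesis $a_{i0}+a_{ii}>0$ together with $a_{i0},a_{ii}\ge0$ and $u_i>0$ forces $a_{i0}+2a_{ii}u_i>0$ (if $a_{ii}>0$ the second term is positive, otherwise $a_{i0}>0$). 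Hence every Gershgorin disk is contained in the open right half-plane, so every eigenvalue of $\widetilde A$, and thus of $A(u)$, has real part at least $\min_i(a_{i0}+2a_{ii}u_i)>0$. This is precisely the normal ellipticity of $A(u)$.

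The one genuinely nontrivial point is the choice of similarity, and it is worth stressing why the transformation by $U$ is essential. Applying Gershgorin to $A(u)$ itself gives the $i$th radius $u_i\sum_{j\neq i}a_{ij}$ and leftmost point $a_{i0}+2a_{ii}u_i+\sum_{j\neq i}a_{ij}(u_j-u_i)$, which can be negative whenever some $u_j<u_i$; the same failure occurs for naive weighted diagonal dominance and for the product factorization $A=U\,(U^{-1}A)$ (the factor $U^{-1}A$ need not be positive definite), which explains why the property resisted an elementary proof for $n\ge3$. Conjugating by $U$ realigns the off-diagonal row sum $\sum_{j\neq i}a_{ij}u_j$ with the cross term already present in $A_{ii}(u)$, so that it cancels and leaves the manifestly positive quantity $a_{i0}+2a_{ii}u_i$; equivalently, $\widetilde A$ is strictly row diagonally dominant with positive diagonal. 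I expect no further obstacle, since this cancellation is exact and holds uniformly in $u\in\R_+^n$.
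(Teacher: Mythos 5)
Your proof is correct and follows essentially the same route as the paper: both conjugate $A(u)$ by $U=\operatorname{diag}(u_1,\ldots,u_n)$ to obtain the matrix $\widetilde A$ with off-diagonal entries $a_{ij}u_j$, and then conclude via strict row diagonal dominance with positive diagonal (the paper cites \cite[Theorem 6.1.10]{HoJo13}, which is exactly the Gershgorin-disk argument you spell out). The only difference is presentational: you make the Gershgorin localization and the cancellation $A_{ii}(u)-R_i=a_{i0}+2a_{ii}u_i$ explicit, whereas the paper delegates this to the cited theorem.
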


\begin{proof}
We reformulate the matrix $A(u)$ by setting
$B_{ii}=a_{i0}+2a_{ii}u_i$ and $B_{ij}=a_{ij}u_j$ for $i\neq j$. Then
$A_{ii}=\sum_{j=1}^nB_{ij}$ and $A_{ij}=B_{ij}$ for $i\neq j$. We define
the matrix
$$
  \widetilde A = \begin{pmatrix}
	A_{11} & a_{12}u_2 & \cdots & a_{1n}u_n \\
	a_{21}u_1 & A_{22} & & a_{2n}u_n \\
	\vdots & & \ddots & \\
	a_{n1}u_1 & a_{n2}u_2 & & A_{nn} \end{pmatrix}
	= \begin{pmatrix}
	\sum_{j=1}^n B_{1j} & B_{12} & \cdots & B_{1n} \\
	B_{21} & \sum_{j=1}^n B_{2j} & & B_{2n} \\
	\vdots & & \ddots & \\
	B_{n1} & B_{n2} & & \sum_{j=1}^n B_{nn} \end{pmatrix}.
$$
Then $A(u)$ and $\widetilde A$ are similar since $\widetilde A=U^{-1}A(u)U$, where
$U=\operatorname{diag}(u_1,\ldots,u_n)$, and thus they have the same eigenvalues.
Since $B_{ii}=a_{i0}+2a_{ii}>0$ by assumption, the matrix $\widetilde A$
is strictly diagonally dominant. It follows from \cite[Theorem 6.1.10]{HoJo13}
that all eigenvalues of $\widetilde A$ have a positive real parts and so does
$A(u)$. This means that $A(u)$ is normally elliptic.
\end{proof}

This result can be generalized to population models of the form \eqref{4.skt},
where $A_{ij}(u)=p_i(u)\delta_{ij}+u_i\pa p_i/\pa u_j$. Indeed, if
$\pa p_i/\pa u_j\ge 0$ and $p_i(u)>\sum_{k\neq i}u_k\pa p_i/\pa u_k$ for
$u\in\dom$ and $i,j=1,\ldots,n$ then $A(u)$ is normally elliptic. For instance, if
$$
  p_i(u) = a_{i0} + \sum_{j=1}^n a_{ij}u_j^s, \quad i=1,\ldots,n,
$$
this condition is satisfied if $0<s<1$, $a_{i0}\ge 0$, $a_{ij}\ge 0$, and
$a_{i0}+a_{ii}>0$.

Another generalization concerns the condition $a_{i0}+a_{ii}>0$. It is not
necessary to conclude the normal ellipticity of $A(u)$. By applying the
Routh--Hurwitz stability criterion \cite[Section 2.3]{HoJo91} (or the
stability criterion of Li\'enard--Chipart \cite[Theorem 11, p.~221]{Gan59}),
we may allow for $a_{i0}+a_{ii}=0$. To avoid too many technicalities, we restrict
ourselves to the case $n=3$. Then the Routh--Hurwith criterion reads as follows:
The roots of the polynomial $\lambda^3+b_2\lambda^2+b_1\lambda+b_0$ have negative
real parts if and only if $b_i>0$ for $i=0,1,2$ and $b_2b_1>b_0$.

\begin{lemma}\label{lem.3}
Let $a_{i0}\ge 0$, $a_{ij}\ge 0$ for $i,j=1,2,3$ and set $b_{ij}=a_{ij}$ for
$i\neq j$ and $b_{ii}=a_{i0}+a_{ii}$. Assume that there exists a tripel
$(i,j,k)\in\{1,2,3\}^3$ such that
$$
  (i,j)\neq (2,1),\ (i,k)\neq (3,1),\ (j,k)\neq (3,2),
	\quad\mbox{and}\quad b_{1i}b_{2j}b_{3k} > 0.
$$
Then $A(u)$ is normally elliptic for any $u\in\R_+^3$.
\end{lemma}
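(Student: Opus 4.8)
The plan is to reduce the problem to the Routh--Hurwitz criterion stated above and to show that the hypothesis is precisely what forces its determinant condition. First I reuse the similarity from the proof of Lemma~\ref{lem.sktne}: with $U=\operatorname{diag}(u_1,u_2,u_3)$ the matrix $\widetilde A=U^{-1}A(u)U$ has the same spectrum as $A(u)$, its off-diagonal entries are $B_{ij}=a_{ij}u_j\ge0$, and its diagonal entries are the row sums $s_i=\sum_{j}B_{ij}$, where $B_{ii}=a_{i0}+2a_{ii}u_i\ge0$. All entries of $\widetilde A$ are nonnegative and, since $s_i=B_{ii}+\sum_{j\ne i}B_{ij}$, the matrix is weakly diagonally dominant. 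Writing its characteristic polynomial as $\lambda^3-b_2\lambda^2+b_1\lambda-b_0$, with $b_2=\operatorname{tr}\widetilde A$, $b_1$ the sum of the principal $2\times2$ minors, and $b_0=\det\widetilde A$, the eigenvalues of $A(u)$ have positive real parts if and only if the polynomial $\lambda^3+b_2\lambda^2+b_1\lambda+b_0$ (whose roots are the negatives of the eigenvalues) is Hurwitz, i.e.\ if and only if $b_0,b_1,b_2>0$ and $b_1b_2>b_0$. The coefficients $b_1,b_2$ are handled immediately: $b_2=\sum_{i,j}B_{ij}\ge0$, while each principal minor equals $s_is_j-B_{ij}B_{ji}$, which (since $s_i\ge B_{ij}$ and $s_j\ge B_{ji}$) expands, after cancelling its single negative term, into a sum of nonnegative products of $B$'s, so $b_1\ge0$.

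The heart of the argument is $b_0=\det\widetilde A$. Expanding the determinant multilinearly in the rows and splitting each diagonal entry $s_i=\sum_lB_{il}$, I obtain $\det\widetilde A$ as a sum of monomials $B_{1i}B_{2j}B_{3k}$, one factor from each row, with nonnegative coefficients; explicitly I expect the two $3$-cycles $B_{12}B_{23}B_{31}$ and $B_{13}B_{21}B_{32}$ to carry coefficient $2$ and every other surviving monomial coefficient $1$. The decisive bookkeeping is to verify that the column-triples $(i,j,k)$ that actually survive are exactly the eighteen triples permitted by the hypothesis, namely those avoiding $(i,j)=(2,1)$, $(i,k)=(3,1)$ and $(j,k)=(3,2)$, while the nine excluded triples are precisely the combinations whose monomials never appear. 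Since $B_{pq}>0$ if and only if $b_{pq}>0$ for $u\in\R_+^3$ (the pair $B_{ii},b_{ii}$ vanishes together, as do $B_{ij}=a_{ij}u_j$ and $b_{ij}=a_{ij}$), this identifies $b_0>0$ with the existence of an admissible triple satisfying $b_{1i}b_{2j}b_{3k}>0$, which is exactly the assumption. Establishing this bijection between surviving monomials and admissible triples is the step I expect to be the main obstacle: it is where the apparently ad hoc exclusion conditions acquire their meaning, and it seems to require a careful expansion rather than a slick argument.

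It remains to prove $b_1b_2>b_0$, which together with $b_0>0$ also forces $b_1,b_2>0$. Here I use the spectral identity $b_1b_2-b_0=(\lambda_1+\lambda_2)(\lambda_1+\lambda_3)(\lambda_2+\lambda_3)$. By Gershgorin's theorem every eigenvalue of $\widetilde A$ lies in a disk centred at $s_i\ge0$ of radius $\sum_{j\ne i}B_{ij}=s_i-B_{ii}\le s_i$, so $\operatorname{Re}\lambda\ge B_{ii}\ge0$; moreover such a disk meets the imaginary axis only at the origin, since $|\mathrm{i}y-s_i|^2=y^2+s_i^2\ge s_i^2\ge(s_i-B_{ii})^2$, with equality only for $y=0$ and $B_{ii}=0$. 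Hence the three eigenvalues have nonnegative real parts, and the only one that could lie on the imaginary axis is $0$. Consequently the product of pairwise sums is nonnegative and can vanish only if two eigenvalues sum to zero, i.e.\ either two real eigenvalues vanish (forcing $b_0=0$) or a conjugate pair is purely imaginary (forcing a nonzero eigenvalue on the imaginary axis); both are excluded once $b_0>0$. Therefore $b_1b_2>b_0$, all Routh--Hurwitz conditions hold, and $A(u)$ is normally elliptic. I note finally that this Gershgorin observation, combined with $b_0>0$, already rules out every eigenvalue on the imaginary axis and so yields the conclusion directly, providing an alternative to the Routh--Hurwitz bookkeeping.
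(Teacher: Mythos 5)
Your proof is correct, and its skeleton --- the similarity $\widetilde A=U^{-1}A(u)U$, the Routh--Hurwitz criterion, and the multilinear expansion of $b_0=\det\widetilde A$ --- coincides with the paper's. The bookkeeping you flagged as the main obstacle is exactly what the paper carries out, and it is routine rather than delicate: in the Sarrus expansion the three negative terms $s_1B_{23}B_{32}$, $s_2B_{13}B_{31}$, $s_3B_{12}B_{21}$ produce precisely the nine monomials with $(j,k)=(3,2)$, $(i,k)=(3,1)$, $(i,j)=(2,1)$, so they cancel and $b_0=\sum_{(i,j,k)}B_{1i}B_{2j}B_{3k}+B_{12}B_{23}B_{31}+B_{13}B_{21}B_{32}$, the sum running over the eighteen admissible triples; your predicted coefficients (two on each $3$-cycle, one otherwise) are exactly right, and this also confirms your observation that the hypothesis is equivalent to $b_0>0$. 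Where you genuinely depart from the paper is the fourth Routh--Hurwitz condition: the paper proves $b_2b_1>b_0$ by further term-counting, namely $b_2b_1-b_0\ge 2\sum_{i,j,k}B_{1i}B_{2j}B_{3k}-B_{12}B_{23}B_{31}-B_{13}B_{32}B_{21}\ge\sum_{i,j,k}B_{1i}B_{2j}B_{3k}>0$, whereas you use the identity $b_1b_2-b_0=(\lambda_1+\lambda_2)(\lambda_1+\lambda_3)(\lambda_2+\lambda_3)$ together with Gershgorin, recovering $b_1,b_2>0$ afterwards from $b_1b_2>b_0>0$; both arguments are sound, yours trading combinatorics for spectral reasoning. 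In fact your closing remark is the strongest part of the proposal and gives a proof simpler than the paper's: since $\widetilde A$ has nonnegative entries and each diagonal entry equals its row sum, Gershgorin yields $\operatorname{Re}\lambda\ge B_{ii}\ge 0$ for every eigenvalue and shows the disks meet the imaginary axis only at the origin, so the single inequality $b_0=\det\widetilde A>0$ already rules out eigenvalues with nonpositive real part --- no Routh--Hurwitz, and no need for $b_1$ or $b_2$ at all.
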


\begin{proof}
The characteristic polynomial $p(\lambda)=\det(A(u)-\lambda I)$
equals $q(\lambda)=p(-\lambda)=\lambda^3 + b_2\lambda^2 + b_1\lambda + b_0$, where
$$
  b_0 = \det A, \quad
	b_1 = \sum_{1\le i<j\le 3}\det\begin{pmatrix} A_{ii} & A_{ij} \\ A_{ji} & A_{jj}
	\end{pmatrix}, \quad b_2 = \operatorname{trace}A.
$$
According to the Routh--Hurwitz criterion, we need to verify that $b_i>0$ for
$i=0,1,2$ and $b_2b_1-b_0>0$ to deduce the normal ellipticity of $A(u)$.
To this end, we recall the definition of $B_{ij}$ from the proof of
Lemma \ref{lem.sktne}: $B_{ii}=a_{i0}+2a_{ii}u_i$ and $B_{ij}=a_{ij}u_j$ for
$i\neq j$. Then $A_{ii}=\sum_{j=1}^3 B_{ij}$ and for $u\in\R_+^3$,
\begin{align*}
  b_2 &= \sum_{i,j=1}^3 B_{ij} > 0, \\
	b_1 &= \sum_{1\le i<j\le 3}(A_{ii}A_{jj}-B_{ij}B_{ji})
	= \sum_{1\le i<j\le 3}\sum_{(k,\ell)\neq(j,i)}B_{ik}B_{j\ell} > 0, \\
	b_0 &= A_{11}A_{22}A_{33} + B_{12}B_{23}B_{31} + B_{13}B_{32}B_{21} \\
	&\phantom{xx}{}- A_{11}B_{23}B_{32} - A_{22}B_{13}B_{31} - A_{33}B_{12}B_{21} \\
	&= \sum_{(i,j,k)}B_{1i}B_{2j}B_{3k} + B_{12}B_{23}B_{31}
	+ B_{13}B_{32}B_{21} > 0,
\end{align*}
where the sum is over all $(i,j,k)\in\{1,2,3\}^3$ such that
$(i,j)\neq(2,1)$, $(i,k)\neq(3,1)$, and $(j,k)\neq(3,2)$. Finally, we have
$$
  b_2b_1-b_0 \ge 2\sum_{i,j,k=1}^3 B_{1i}B_{2j}B_{3k}
	- B_{12}B_{23}B_{31} - B_{13}B_{32}B_{21}
	\ge \sum_{i,j,k=1}^3 B_{1i}B_{2j}B_{3k} > 0
$$
for $u\in\R_+^3$, finishing the proof.
\end{proof}

For instance, the matrix
$$
  A(u) = \begin{pmatrix}
	u_3 & 0 & u_1 \\
	u_2 & u_1 & 0 \\
	0 & u_3 & u_2 \end{pmatrix}
$$
satisfies the conditions of Lemma \ref{lem.3} with $(i,j,k)=(1,2,3)$.
Note that the detailed-balance condition is {\em not} satisfied for this matrix,
but Lemma \ref{lem.3} states that it is normally elliptic.
It is an open question whether \eqref{1.eq} with this diffusion matrix has
an entropy structure.


\subsection*{Volume-filling models}

We show the normal ellipticity of the diffusion matrix \eqref{1.vf} associated to
the volume-filling models in a special case.

\begin{lemma}\label{lem.vol}
Let $A(u)$ be defined by
$$
  A_{ij}(u) = \delta_{ij}p_i(u_i)q_i(u_0) + u_ip_i(u_i)q_i'(u_0)
	+ \delta_{ij}u_iq_i(u_0)\frac{\pa p_i}{\pa u_i}(u_i), \quad i,j=1,\ldots,n,
$$
for $i=1,\ldots,n$, $u\in\dom=\{u\in\R_+^n:\sum_{i=1}^n u_i<1\}$,
and we recall that $u_0=1-\sum_{i=1}^n u_i$. We assume that
$p_i$, $q_i$, and $q_i'$ are positive functions. Then $A(u)$ is normally elliptic
(and diagonalizable).
\end{lemma}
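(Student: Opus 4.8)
The plan is to exploit the very special structure of $A(u)$. Because $p_i$ depends on $u_i$ only, the middle term $u_ip_i(u_i)q_i'(u_0)$ carries no factor $\delta_{ij}$ and no dependence on $j$, so it is \emph{constant along each row}. Setting
$$
  d_i := p_i(u_i)q_i(u_0) + u_iq_i(u_0)\frac{\pa p_i}{\pa u_i}(u_i), \qquad
	\beta_i := u_ip_i(u_i)q_i'(u_0),
$$
I can write $A_{ij}(u) = \delta_{ij}d_i + \beta_i$, i.e. $A(u) = D + \beta\mathbf 1^\top$ is a diagonal matrix $D=\operatorname{diag}(d_1,\dots,d_n)$ plus a rank-one matrix, where $\beta=(\beta_1,\dots,\beta_n)^\top$ and $\mathbf 1=(1,\dots,1)^\top$ is the all-ones vector. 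Note that $\beta_i>0$ for $u\in\dom$. This diagonal-plus-rank-one form is what makes the factorization machinery of Section \ref{sec.fac} directly applicable.

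First I would factor $A(u)=A_2A_1$ in a form fitted to Proposition \ref{prop.spdf}. Pulling the positive factor $\beta_i$ out of row $i$, set $A_2:=\operatorname{diag}(\beta_1,\dots,\beta_n)$ and $A_1:=A_2^{-1}A(u)$. Since every off-diagonal entry of row $i$ equals $\beta_i$, all off-diagonal entries of $A_1$ equal $1$, while its $i$th diagonal entry is $1+d_i/\beta_i$; thus
$$
  A_1 = \mathbf 1\mathbf 1^\top + \operatorname{diag}(d_1/\beta_1,\dots,d_n/\beta_n),
$$
which is manifestly symmetric, and $A_2$ is symmetric positive definite because $\beta_i>0$.

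The decisive step is to show that $A_1$ is positive definite, for then $A(u)$ is a product of two symmetric positive definite matrices and Proposition \ref{prop.spdf} gives at once that $A(u)$ is normally elliptic and diagonalizable. For $z\in\R^n$ we have
$$
  z^\top A_1 z = \bigg(\sum_{i=1}^n z_i\bigg)^2 + \sum_{i=1}^n\frac{d_i}{\beta_i}z_i^2,
$$
so, $\mathbf 1\mathbf 1^\top$ being positive semidefinite, it suffices to check that the diagonal is positive, that is, $d_i>0$ for every $i$. This is exactly where the hypotheses enter: writing $d_i=q_i(u_0)\big(p_i(u_i)+u_i\,\pa p_i/\pa u_i\big)$ and using $q_i>0$ together with the sign condition $p_i+u_i\,\pa p_i/\pa u_i>0$ (which holds, e.g., whenever the transition rate $p_i$ is nondecreasing, $\pa p_i/\pa u_i\ge 0$), one obtains $d_i>0$ and the proof is finished.

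I expect the only genuine obstacle to be this positivity $d_i>0$ of the diagonal part. It is not merely a technicality: by the inertia theorem of Sylvester (as used in the remark following Proposition \ref{prop.sf} and in the alternative proof of Proposition \ref{prop.pp}), the inertia of $A(u)=A_2A_1$ coincides with that of $A_1$, so a single sufficiently negative $d_i$ would produce a nonpositive eigenvalue of $A(u)$ and destroy normal ellipticity; hence the argument must use, and does use, the correct sign of $p_i+u_i\,\pa p_i/\pa u_i$. Everything else --- the diagonal-plus-rank-one decomposition and the explicit factorization $A_2^{-1}A(u)$ --- is routine. As an alternative to Proposition \ref{prop.spdf}, I could combine Proposition \ref{prop.sf}(i), which yields diagonalizability from the symmetric factorization, with the same inertia theorem to transfer the positivity of the spectrum of $A_1$ to $A(u)$.
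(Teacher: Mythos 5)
Your proof is correct, and at its core it is the same argument as the paper's: both factor $A(u)$ as a positive diagonal matrix times a symmetric matrix, verify positive definiteness of the symmetric factor via the diagonal-plus-rank-one quadratic form, and conclude with Proposition \ref{prop.spdf}. The difference is the choice of diagonal factor, and yours is cleaner. The paper pulls $u_ip_iq_i$ out of row $i$, which leaves row-dependent off-diagonal entries $q_i'/q_i$, and must therefore rescale row $i$ by $\bigl(\prod_{k\neq i}q_k'/q_k\bigr)^{-1}$ to restore symmetry; its symmetric factor then has constant off-diagonal $V=\prod_{k=1}^n q_k'/q_k$ and diagonal $R_i+V$ with $R_i=(1/u_i+p_i'/p_i)\prod_{k\neq i}q_k'/q_k$. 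You instead pull out $\beta_i=u_ip_iq_i'$, the coefficient of the rank-one part, so the off-diagonal entries become $1$ immediately and the symmetric factor is $\mathbf 1\mathbf 1^\top+\operatorname{diag}(d_1/\beta_1,\dots,d_n/\beta_n)$, with no compensating products needed. Beyond this normalization the two computations coincide: the paper checks $z^\top A_2z=\sum_i R_iz_i^2+V(\sum_i z_i)^2>0$, you check $z^\top A_1z=\sum_i(d_i/\beta_i)z_i^2+(\sum_i z_i)^2>0$, and both invoke Proposition \ref{prop.spdf} (your fallback via Proposition \ref{prop.sf}(i) plus the inertia theorem is also sound).

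One further remark, which speaks in your favor rather than against you. Your argument needs $d_i>0$, i.e. $p_i+u_i\,\pa p_i/\pa u_i>0$, and you present this as ``where the hypotheses enter''; strictly speaking the lemma's stated hypotheses ($p_i,q_i,q_i'>0$) do not contain it. But the paper's proof silently requires exactly the same condition, namely $R_i>0$, and the condition is not removable: already for $n=1$, taking $p(u)=e^{-Mu}$ and $q(s)=1+s$ gives $A(u)=pq+upq'+uqp'<0$ for $M$ large, so $A$ fails to be normally elliptic. Thus both proofs rest on the implicit assumption $p_i+u_i\,\pa p_i/\pa u_i>0$ (satisfied, e.g., when $\pa p_i/\pa u_i\ge 0$); your write-up at least makes this assumption visible.
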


\begin{proof}
Since we can write
$$
  A_{ij}(u) = (\delta_{ij}u_ip_iq_i)\bigg(\delta_{ij}\bigg(\frac{1}{u_i}
	+ \frac{p_i'}{p_i}\bigg) + \frac{q_i'}{q_i}\bigg),
$$
we can decompose $A(u)=A_1A_2$, where
$$
  (A_1)_{ij} = \delta_{ij}u_ip_iq_i\bigg(\prod_{k\neq i}\frac{q_k'}{q_k}\bigg)^{-1},
	\quad
	(A_2)_{ij} = \delta_{ij}\bigg(\frac{1}{u_i}+\frac{p_i'}{p_i}\bigg)
	\prod_{k\neq i}\frac{q_k'}{q_k}+ \prod_{k=1}^n\frac{q_k'}{q_k}.
$$
Setting $V:=\prod_{k=1}^n q_k'/q_k$ and
$R_i:=(1/u_i+p_i'/p_i)\prod_{k\neq i}q_k'/q_k$, the matrix $A_2$ becomes
$$
  A_2 = \begin{pmatrix}
	R_1+V & V & \cdots & V \\
	V & R_2 + V & & V \\
	\vdots & & \ddots & \\
	V & V & & R_n+V \end{pmatrix}.
$$
This matrix is symmetric positive definite since the leading principle minors
are positive,
$$
  \prod_{k=1}^i R_k\bigg(1 + V\sum_{k=1}^i\frac{1}{R_k}\bigg) > 0, \quad
	i=1,\ldots,n
$$
(or since $z^\top A_2z=\sum_{i=1}^n R_iz_i^2 + (\sum_{i=1}^n z_i)^2V>0$ for $z\neq 0$).
We infer from Proposition \ref{prop.spdf} that $A(u)$ is normally elliptic and
diagonalizable.
\end{proof}


\subsection*{Fluid mixture model governed by partial pressure gradients}

The diffusion matrix $A(u)$ of the fluid mixture model \eqref{1.pp} is given by
$A_{ij}(u)=u_i\pa p_i/\pa u_j$. We consider the case of linear pressures,
$p_i(u)=\sum_{j=1}^n a_{ij}u_j$ with $a_{ij}\ge 0$.
Then $A_{ij}(u)=u_ia_{ij}$, and $A(u)$ can be decomposed
according to $A(u)=A_1A_2$ with $A_1=\operatorname{diag}(u_1,\ldots,u_n)$ and
$A_2=(a_{ij})$. The matrix $A_1$ is clearly symmetric positive definite.
Thus, $A(u)$ is normally elliptic if $(a_{ij})$ is positive definite
(but not necessarily symmetric).

This condition is sufficient but not necessary. Indeed, if $n=2$ and $a_{ij}\ge 0$,
$A(u)$ is normally elliptic in $\dom$ if and only if
$\det A_2=a_{11}a_{22}-a_{12}a_{21}>0$,
and in this case, the eigenvalues of $A(u)$ are positive.
On the other hand, $(a_{ij})$ is positive definite if and only if
$\det A_2>\frac14(a_{12}-a_{21})^2$, 
which is more restrictive than $\det A_2>0$ except if $A_2$ is symmetric.


\section{Connections and extensions}\label{sec.conn}

\subsection*{More general cross-diffusion systems}

Amann \cite[Section 4]{Ama93} considered a more general class of
cross-diffusion equations:
\begin{equation}\label{8.eq}
  \pa_t u_i = \sum_{j=1}^n\sum_{k,\ell=1}^d\frac{\pa}{\pa x_k}\bigg(A^{k\ell}_{ij}(u)
	\frac{\pa u_j}{\pa x_\ell}\bigg), \quad i=1,\ldots,n.
\end{equation}
These equations reduce to \eqref{1.eq} if $A^{k\ell}_{ij}(u)=\delta_{k\ell} A_{ij}(u)$.
Amann calls the differential operator on the right-hand side normally elliptic if
all the eigenvalues of its principal part $A_\pi(u,z)$ have positive real parts,
where
$$
  A_\pi(u,z) = \sum_{k,\ell=1}^d A^{k\ell}(u)z_kz_\ell, \quad u\in\dom,\ z\in\R^n.
$$
If $A^{k\ell}_{ij}(u)=\delta_{k\ell} A_{ij}(u)$, this coincides with our definition.
We say that \eqref{8.eq} has an entropy structure if there exists a strictly
convex function $h\in C^2(\dom)$ such that
\begin{equation}\label{8.pd}
  \sum_{k,\ell=1}^d h''(u)A^{k\ell}(u)z_kz_\ell
\end{equation}
is positive definite for all $u\in\dom$, $z\in\R^n$.
Our results can be extended in a straightforward way to this situation. For
instance, if $A_\pi(u,z)$ is normally elliptic and there exists a strictly convex
function $h\in C^2(\dom)$ such that $h''(u)A^{k\ell}(u)$ is
symmetric for all $u\in\dom$ and $k,\ell=1,\ldots,d$ then \eqref{8.eq} has an
entropy structure. This follows from the fact that $A_\pi(u,z)=A_1A_2$, where
$A_1=h''(u)^{-1}$ is symmetric positive definite and
$A_2=\sum_{k,\ell=1}^d h''(u) A^{k\ell}(u)z_kz_\ell$ is symmetric.
So, the claim follows from Proposition \ref{prop.sf} (ii).

\subsection*{Symmetrization by Kawashima and Shizuta}

The entropy structure of \eqref{8.eq} has been explored by Kawashima and
Shizuta \cite{KaSh88}, also including first-order terms. They call
$h\in C^2(\dom)$ an entropy for \eqref{8.eq} if $h$ is strictly convex,
the Onsager matrix $A^{k\ell}(u)h''(u)^{-1}$ is symmetric, and \eqref{8.pd} is
symmetric positive semidefinite
for all $u\in\dom$ and $k,\ell=1,\ldots,d$. In our situation, this is equivalent
to the symmetry and positive semidefiniteness of $h''(u)A(u)$.
It is shown in \cite{KaSh88} that if an entropy exists then system \eqref{8.eq}
can be symmetrized in the sense that it can be written as
\begin{equation}\label{8.w}
  \pa_t u_i(w) = \sum_{j,k,\ell=1}^n\frac{\pa}{\pa x_j}\bigg(B^{k\ell}_{ij}(w)
	\frac{\pa u_j}{\pa x_\ell}\bigg), \quad i=1,\ldots,n,
\end{equation}
where $B^{k\ell}(w) = A^{k\ell}(u(w))u'(w)$ is symmetric positive semidefinite
and $u(w):=(h')^{-1}(w)$. Conversely,
if there exists a diffeomorphism $w\mapsto u(w)$ such that the Jacobian
$(\pa w_i/\pa u_j)$ is symmetric and $B^{k\ell}(w)$ is symmetric positive semidefinite
then there exists an entropy. Indeed, by Poincar\'e's lemma for closed differential
forms and the symmetry $\pa w_i/\pa u_j=\pa w_j/\pa u_i$, there exists a function
$g$ such that $\pa g/\pa w_i=w_i$. Then $h(u)=u\cdot w(u) - g(w(u))$ is an
entropy for \eqref{8.eq}.

In the presence of first-order terms, we obtain hyperbolic-parabolic
balance laws:
$$
  \pa_t u_i + \sum_{j=1}^d \frac{\pa f_{ij}}{\pa x_j}(u)
	= \sum_{j=1}^n\sum_{k,\ell=1}^d\frac{\pa}{\pa x_k}\bigg(A^{k\ell}_{ij}(u)
	\frac{\pa u_j}{\pa x_\ell}\bigg),
$$
and the existence of an entropy requires the additional condition
$h'(u)f'_j(u)=q_j'(u)$ for some real-valued smooth functions $q_j$, where
$f_j=(f_{1j},\ldots,f_{nj})$ and $j=1,\ldots,d$. The first-order terms do not
modify the entropy production.
We refer to \cite[Theorem 2.1]{KaSh88} for details. Such systems have been also studied
in \cite{Ser10}, including rank deficient Onsager matrices. The extension of
our results to such situations is a future work. An example are Maxwell--Stefan
systems whose diffusion matrix has rank $n-1$; see \cite{Bot11,JuSt13}.
The symmetrizability property of the Euler equations was first observed by
Gudonov and later by Friedrichs and Lax and extended by Boillat to general
hyperbolic systems; see, e.g., \cite{BDLL96}.
Note that our definition of entropy does not need the symmetry of the Onsager matrix;
see Case 1.1 in Section \ref{sec.ent}.

\subsection*{Gradient-flow structure}

The entropy structure of \eqref{1.eq} is strongly related to the gradient-flow
formulation of Mielke and co-workers \cite{LiMi13,MPR16}. Let $M$ be a manifold,
$H:M\to\R$ be differentiable, and $K(u):T^*_uM\to T_uM$ be symmetric positive
definite for all $u\in M$, where $T_uM$ is the tangent space at $u\in M$
and $T^*_uM$ its cotangent space.
Physically, elements of $T_uM$ are the thermodynamic fluxes
and elements of $T^*_uM$ are the driving forces (here: entropy variables).
Then Mielke et al.\ call a solution $u:[0,T]\to M$ to
$$
  \pa_t u = -K(u)H'(u), \quad t>0,
$$
a gradient flow. The positive definiteness of $K(u)$ implies that
$$
  \frac{dH}{dt} = \langle H'(u),\pa_t u\rangle = -\langle H'(u),K(u)H'(u)\rangle \le 0,
$$
and we can interpret $H$ as an entropy. Here, $\langle\cdot,\cdot\rangle$ is
the dual paring between $T^*_uM$ and $T_uM$.
In the special case $K(u)=-\diver(B(w(u))\na(\cdot))$ and identifying
$H'(u)$ with $h'(u)$, we recover
\eqref{1.eq} since $K(u)H'(u)=\diver(B(w(u))\na h'(u))=\diver(A(u)\na u)$, where
$B(w(u))=A(u)h''(u)^{-1}$. Note that the mapping $\xi\mapsto K(u)\xi$ is linear.
This framework was generalized to nonlinear mappings $\xi\mapsto K(u,\xi)\xi$
in \cite{MPR16}. Physically, this means that the thermodynamics fluxes depend
nonlinearly on the driving forces. Then the gradient-flow equation reads as
$$
  \pa_t u = \pa_\xi\Psi(u,-H'(u)),\quad t>0,
$$
where $\Psi^*:TM\to\R$ is convex in its second argument and $\pa_\xi\Psi^*$ is the
partial derivative with respect to the second argument. This equation
is by Legendre--Fenchel theory equivalent to
$$
  \Psi(u,\pa_t u) + \Psi^*(u,-H'(u)) + \langle H'(u),u\rangle = 0.
$$
An example is $\Psi^*(u,\xi)=\frac12\langle\xi,K(u)\xi\rangle$.
The function $H(u)$ is an entropy in the sense
$$
  \frac{dH}{dt} = \langle H'(u),\pa_t u\rangle
	= -\Psi(u,\pa_t u) - \Psi^*(u,-H'(u)) \le 0,
$$
since $\Psi\ge 0$ and
$\Psi^*\ge 0$ (see \cite{MPR16} for details). It is an open question
to what extent the results of this paper can be extended to this case.



\begin{thebibliography}{11}
\bibitem{Ama90} H.~Amann. Dynamic theory of quasilinear parabolic equations. II.
Reaction-diffusion systems. {\em Diff. Int. Eqs.} 3 (1990), 13--75.

\bibitem{Ama93} H.~Amann. Nonhomogeneous linear and quasilinear elliptic and parabolic
boundary value problems. In: H.J.~Schmeisser and H.~Triebel (editors), {\em Function
Spaces, Differential Operators and Nonlinear Analysis}, pp.~9--126. Teubner,
Stuttgart, 1993.

\bibitem{BDLL96} G.~Boillat, C.~Dafermos, P.~Lax, and T.-P.~Liu. {\em Recent
Mathematical Methods in Nonlinear Wave Propagation}.
Lect. Notes Math. 1640. Springer, Berlin, 1996.

\bibitem{Bot11} D.~Bothe. On the Maxwell-Stefan equations to multicomponent
diffusion. In: J.~Escher et al. (eds.), {\em Parabolic Problems. Progress in
Nonlinear Differential Equations and their Applications}, pp.~81--93.
Springer, Basel, 2011.

\bibitem{Bos87} A.~Bosch. Note on the factorization of a square matrix into two
Hermitian or symmetric matrices. {\em SIAM Review} 29 (1987), 463--468.

\bibitem{CaCo08} V.~Calvez and L.~Corrias. The parabolic-parabolic Keller--Segel model
in $\R^2$. {\em Commun. Math. Sci.} 6 (2008), 417--447.

\bibitem{CaSc62} D.~Carlson and H.~Schneider. Inertia theorems for matrices: the
semi-definite case. {\em J. Math. Anal. Appl.} 6 (1963), 430--446.

\bibitem{CDJ19} L.~Chen, E.~Daus, and A.~J\"ungel. Rigorous mean-field limits and
cross-diffusion. {\em Z. Angew. Math. Phys.} 70 (2019), no. 122, 21 pages.

\bibitem{CDJ18} X.~Chen, E.~Daus, and A.~J\"ungel. Global existence analysis of
cross-diffusion population systems for multiple species.
{\em Arch. Ration. Mech. Anal.} 227 (2018), 715--747.

\bibitem{CPZ04} L.~Corrias, B.~Perthame, and H.~Zaag. Global solutions of some
chemotaxis and angiogenesis systems in high space dimensions.
{\em Milan J. Math.} 72 (2004), 1--28.

\bibitem{DLMT15} L.~Desvillettes, T.~Lepoutre, A.~Moussa, and A.~Trescases.
On the entropic structure of reaction-cross diffusion systems.
{\em Commun. Partial Diff. Eqs.} 40 (2015), 1705--1747.

\bibitem{Gan59} F. R.~Gantmacher. {\em The Theory of Matrices}. Volume 2.
Chelsea Publishing Company, New York, 1959.

\bibitem{Hal15} B.~Hall. {\em Lie Groups, Lie Algebras, and Representations}.
Second edition. Springer, Cham, 2015.

\bibitem{HiJu11} S.~Hittmeir and A.~J\"ungel. Cross diffusion preventing blow up in the
two-dimensional Keller--Segel model. {\em SIAM J. Math. Anal.} 43 (2011), 997--1022.

\bibitem{HoJo91} R.~Horn and C.~Johnson. {\em Topics in Matrix Analysis}. First
edition. Cambridge University Press, Cambridge, 1991.

\bibitem{HoJo13} R.~Horn and C.~Johnson. {\em Matrix Analysis}. Second
edition. Cambridge University Press, Cambridge, 2013.

\bibitem{Jue15} A.~J\"ungel. The boundedness-by-entropy method for
cross-diffusion systems. {\em Nonlinearity 28} (2015), 1963--2001.

\bibitem{Jue16} A.~J\"ungel. {\em Entropy Methods for Diffusive Partial Differential
Equations}. SpringerBriefs in Mathematics, Springer, 2016.

\bibitem{Jue17} A.~J\"ungel. Cross-diffusion systems with entropy structure.
In: K.~Mikula, D.~\v{S}ev\v{c}ovi\v{c}, and J.~Urb\'an (eds.).
{\em Proceedings of Equadiff 2017 Conference}, Spektrum Stu Publishing,
Bratislava, 2017, pp.~181--190.

\bibitem{JuSt13} A.~J\"ungel and I.~Stelzer. Existence analysis of
Maxwell-Stefan systems for multicomponent mixtures.
{\em SIAM J. Math. Anal.} 45 (2013), 2421--2440.

\bibitem{KaSh88} S.~Kawashima and Y.~Shizuta. On the normal form of the symmetric
hyperbolic-parabolic systems associated with the conservation laws.
{\em Tohoku Math. J.} 40 (1988), 449--464.

\bibitem{LeMo17} T.~Lepoutre and A.~Moussa. Entropic structure and duality for
multiple species cross-diffusion systems. {\em Nonlin. Anal.} 159 (2017), 298--315.

\bibitem{LuPe00} L.-Z.~Lu and C.~E.~M.~Pearce. Some new bounds for singular values
and eigenvalues of matrix products. {\em Ann. Oper. Research} 98 (2000), 141--148.

\bibitem{LiMi13} M.~Liero and A.~Mielke. Gradient structures and geodesic convexity
for reaction-diffusion systems.
{\em Phil. Trans. Royal Soc. A.} 371 (2005), 20120346, 28 pages, 2013.

\bibitem{MPR16} A.~Mielke, M.~Peletier, and D.~Renger. A generalization of Onsager's
reciprocity relations to gradient flows with nonlinear mobility.
{\em J. Non-Equil. Thermodyn.} 41 (2016), 141--149.

\bibitem{Ser10} D.~Serre. The structure of dissipative viscous system of conservation
laws. {\em Physica D} 239 (2000), 1381--1386.

\bibitem{SKT79} N.~Shigesada, K.~Kawasaki, and E.~Teramoto. Spatial segregation of
interacting species. {\em J. Theor. Biol.} 79 (1979), 83--99.

\bibitem{ZaJu17} N.~Zamponi and A.~J\"ungel. Analysis of degenerate cross-diffusion
population models with volume filling.
{\em Ann. Inst. H. Poincar\'e -- Anal. non lin.} 34 (2017), 1--29.
(Erratum: 34 (2017), 789--792.)
\end{thebibliography}
\end{document}